
\documentclass[10pt]{amsart}
\usepackage[margin=1.2in]{geometry}

\usepackage{kotex}
\usepackage{colortbl}
\usepackage[pagewise]{lineno}

\newcommand{\ignore}[1]{}
\newtheorem{theorem}{Theorem}[section]
\newtheorem{remark}[theorem]{Remark}
\newtheorem{lemma}[theorem]{Lemma}
\newtheorem{proposition}[theorem]{Proposition}

\newtheorem{definition}[theorem]{Definition}

\numberwithin{equation}{section}


\newcommand{\bbr}{\mathbb{R}}

\newcommand{\<}{\langle}
\renewcommand{\>}{\rangle}


\usepackage{graphicx, subfigure}
\usepackage[export]{adjustbox}

\begin{document}

\title[Kinetic description for the herding behavior in financial market]{A kinetic description for the herding behavior in financial market}

\author{Hyeong-Ohk Bae}
\address{Department of Financial Engineering, Ajou University, Suwon, Korea (Republic of)}
\email{hobae@ajou.ac.kr}
\author{Seung-yeon Cho }
\address{Department of Mathematics, Sungkyunkwan University, Suwon 440-746, Korea (Republic of)}
\email{chosy89@skku.edu}
\author{Jeongho Kim}
\address{Department of Mathematical Sciences, Seoul National University, Seoul 08826,  Korea (Republic of)}
\email{jhkim206@snu.ac.kr}
\author{SEOK-BAE YUN }
\address{Department of Mathematics, Sungkyunkwan University, Suwon 440-746, Korea (Republic of)}
\email{sbyun01@skku.edu}




\keywords{Collective behavior, herding model, mean-field limit, measure-valued solutions, Barbalat's theorem }

\begin{abstract}
As a continuation of the study of the herding model proposed in \cite{BCLY}, we consider in this paper the derivation of the kinetic version of the herding model, the existence of the measure-valued solution and the corresponding herding behavior at the kinetic level.
We first consider the mean-field limit of the particle herding model and derive the existence of the measure-valued solutions for the kinetic herding model.
We then study the herding phenomena of the solutions in two different ways by introducing two different types of herding energy functionals. First, we derive a herding phenomena of the measure-valued solutions under virtually no restrictions on the parameter sets using the Barbalat's lemma. We, however, don't get any herding rate in this case. On the other hand, we also establish a Gr\"onwall type estimate for another herding functional, leading to the exponential herding rate, under comparatively strict conditions. These results are then extended to smooth solutions.
\end{abstract}
\maketitle


\section{Introduction}
The collective behaviors of many body systems are easily found in nature and society, and numerous mathematical models \cite{BD,CKJRF,CS,DM,DCBC,MAL,VCBCS} have been introduced to understand such collective phenomena. Recently, a particle type herding model was introduced to describe herding behaviors in financial market \cite{BCLY}:

\begin{align}\label{A-1}
\begin{split}
\frac{dx_i}{dt}&=v_i,\quad t>0,\quad i=1,2,\ldots, N,\cr
\frac{dv_i}{dt}&
=\frac{\lambda_x}{N}\sum_{j=1}^N\phi\big(|x_j{-}x_i|\big)(x_j-x_i)
+\frac{\lambda_v}{N}\sum_{j=1}^N\phi\big(|x_j{-}x_i|\big)\left(v_j-v_i\right)\\
&+\lambda_w \big(w(x,t)-x_i\big).
\end{split}
\end{align}
Here,  $N$ and $d$ denote respectively the numbers of market players and assets   in the financial market. For $i=1,2,\ldots, N$, $x_i=(x_{i}^1,\ldots, x_{i}^d)\in\bbr^d$ represents $i$-th participant's subjective price or value of the assets $1,2,\ldots, d$, and $v_i=(v_{i}^1,\ldots, v_{i}^d)$ represents the favorability of the $i$-th participant to assets $1,2,\ldots,d$.
The function $\phi$ is the communication rate and $\lambda_x$, $\lambda_v$ and $\lambda_w$ are interaction strength. This model describes the herding behavior in the market in which a consensus emerges in the subject price and favorability.

The first equation in \eqref{A-1} says that the subject price is affected by the favorability on those assets. The first term in the right-hand side of the second equation implies that the favorability is affected by the other player's subjective prices, and the second term describes that the favorability is affected by other player's favorability. The third term describes the effect of market signal.

Throughout this paper, we assume that the communication rate $\phi:\bbr\to\bbr^+$ is a smooth Lipschitz continuous and satisfies following conditions:
\begin{align}\label{Phi condition}
\phi(r)=\phi(-r)\quad \mbox{and} \quad (\phi(r)-\phi(s))(r-s)\le 0
\end{align}
for $r,s\ge0$.

The market signal $w(x,t)$ can take various forms according to market situations (see \cite{BCLY}). In this paper, we focus on the case where the ensemble average of the subjective prices plays the role of market signal affecting the expectation and favorability of market players, in which case, $w(x,t)$ is given by
\[w(x,t)=\frac{1}{N}\sum_{i=1}^N x_i =:x_c.\]

It was shown in \cite{BCLY} that herding phenomena occurs in the following sense:
\[\lim_{t\to \infty} |x_{i}-x_j|=\lim_{t\to\infty} |v_i-v_j|=0\]
for a very general initial conditions.

Before we move on to the kinetic model, a few more words on what led to the re-interpretation of the subjective price of the assets and the favorability as the position and the velocity in the phase space are in order.
Starting from the geometric Brownian motion of a stock price,
we derive a relation of stock price $S(t)$ and rate of return $\mu$ given by $\frac{d}{dt} \log \mathbb{E}(S) = \mu$, where $\mathbb{E}[\cdot]$ denotes the expectation.
Introducing new variables $x$ and $v$ by $(x,v)=(\log \mathbb{E}(S), \mu)$ and interpreting them as the player's subjective value and favorability respectively, we arrive at the relation: $\frac{dx}{dt}=v$, which enables one to reinterpret the two market variables $x$ and $v$ as the position and velocity variables in the phase space. For further details, we refer to \cite[Section 2]{BCLY}.

 A standard BBGKY argument (see Section 2) shows that the kinetic mean-field model corresponding to the particle model \eqref{A-1} is given by

\begin{align}\label{A-2}
\frac{\partial{f}}{\partial{t}}+v \cdot \nabla_{x}f +\nabla_{v} \cdot \big(Q(f)f\big)=0,
\end{align}
where the non-local herding force $Q(f)$ is defined by
\begin{equation}\label{A-3}
Q(f)=\lambda_w Q_{1}(f) + \lambda_x Q_2(f) + \lambda_v Q_3(f),
\end{equation}
and
\begin{align}
\begin{aligned}\label{A-4}
&Q_1(f)= \int_{\mathbb{R}^{2d}}   (x^*-x) f(x^*,v^*,t) dx^* dv^*,\\
&Q_2(f)= \int_{\mathbb{R}^{2d}}\phi(|x-x^*|)(x^*-x) f(x^*,v^*,t) dx^* dv^*,\\
&Q_3(f)= \int_{\mathbb{R}^{2d}}\phi(|x-x^*|)(v^*-v) f(x^*,v^*,t) dx^* dv^*.
\end{aligned}
\end{align}

The distribution function $f(x,v,t)$ denotes the number density of market players with given subjective price $x \in \mathbb{R}^d$ and favorability $v \in \mathbb{R}^d$ at time $t \ge 0 $ .

\subsection{Main result}
The main goal of this paper is two-folded. First, we rigorously verify the mean-field limit  from the microscopic model \eqref{A-1} toward mesoscopic model \eqref{A-2}. In order to state the mean-field limit result, we start with the definition of the measure-valued solutions. Let $\mathcal{M}(\bbr^{2d})$ be a space of nonnegative Radon measure, which is a dual of $C_0(\bbr^{2d})$. For all $\nu\in\mathcal{M}(\bbr^{2d})$ and $g\in C_0(\bbr^{2d})$, we denote their duality as
\[\<\nu,g\>:=\int_{\bbr^{2d}}g(x,v)\nu(dx,dv).\]
Using this notation, we define the measure-valued solution of \eqref{A-2} as follows.

\begin{definition}\label{D3.1}
For $T\in(0,\infty)$, $\mu\in L^\infty([0,T);\mathcal{M}(\bbr^{2d}))$ is a measure-valued solution to (\ref{A-2}) with initial measure $\mu_0$ if and only if the following conditions hold:
\begin{enumerate}
\item $\mu$ is weakly continuous:
\[\<\mu_t,g\> \mbox{ is continuous function with respect to time $t$ for all $g\in C_0(\bbr^{2d})$.}\]
\item $\mu$ satisfies the following integral equation for all $g\in C_0^1(\bbr^{2d}\times [0,T))$:
\[\<\mu_t,g(\cdot,\cdot,t)\>-\<\mu_0,g(\cdot,\cdot,0)\>=\int_0^t \<\mu_s,\partial_s g+v\cdot \nabla_xg+Q\cdot\nabla_vg\>\,ds,\]
where $Q(x,v,\mu_s)$ is a non-local forcing term given by
\begin{align*}
&Q(x,v,\mu_s):=\int_{\bbr^{2d}}\bigg[\lambda_w(x^*-x)+ \lambda_x\phi(|x-x^*|)(x^*-x)+ \lambda_v\phi(|x-x^*|)(v^*-v)\bigg]\mu_s(dx^*,dv^*).
\end{align*}

\end{enumerate}
\end{definition}

 Now, we present our first result on the existence and the uniqueness of measure-valued solutions of equation \eqref{A-2}.
\begin{theorem}\label{T.1.1}
Let $\mu_0$ be a Radon measure with a compact support. Assume the communication rate satisfies (\ref{Phi condition}) and
\[
0\le\phi(r)\le \phi_M,
\]
for some positive constant $\phi_M$.
Then, there exists a unique measure-valued solution $\mu\in L^\infty([0,T);\mathcal{M}(\bbr^{2d}))$ to \eqref{A-2} with initial data $\mu_0$. Moreover, let $\mu_t$ and $\nu_t$ be solutions to \eqref{A-2} with initial data $\mu_0$ and $\nu_0$ respectively. Then there exists a positive constant $C=C_T$ such that the following stability estimate holds:
\[
W_1(\mu_t,\nu_t)\le C_TW_1(\mu_0,\nu_0),
\]
where $W_1(\cdot,\cdot)$ denotes the Wasserstein-1 distance (See Definition \ref{Wasser}).
\end{theorem}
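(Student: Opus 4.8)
The plan is to represent the measure-valued solution as the push-forward of $\mu_0$ along the characteristic flow generated by the non-local force, to obtain existence through a fixed-point scheme, and to close a Gr\"onwall-type estimate for $W_1$ that simultaneously yields the stability bound and uniqueness. First I would freeze a measure trajectory $\mu\in C([0,T];\mathcal{M}(\bbr^{2d}))$ with uniformly compactly supported slices and consider the characteristic system
\[
\dot X = V, \qquad \dot V = Q(X,V,\mu_t),
\]
with $Q$ as in Definition \ref{D3.1}. Since $\phi$ is bounded by $\phi_M$ and Lipschitz, and $\mu_t$ is compactly supported, the right-hand side is locally Lipschitz in $(X,V)$, so the flow $\Phi^\mu_t$ is well defined by the Cauchy--Lipschitz theorem. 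A preliminary step is to control the support: writing $R_x(t),R_v(t)$ for the $x$- and $v$-radii of $\mathrm{supp}\,\mu_t$, the bounds $|Q|\le C(1+R_x+R_v)$ --- each of the three terms being controlled by $\lambda_w$, $\lambda_x\phi_M$, $\lambda_v\phi_M$ and the support size --- together with $\dot R_x\le R_v$ give, via Gr\"onwall, at most exponential growth of the support. This confines the dynamics to a fixed compact set $K_T$ on $[0,T]$ and makes all the Lipschitz constants below depend only on $T$, $\phi_M$, the parameters, and $\mathrm{supp}\,\mu_0$.

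For existence I would study the solution operator $\mathcal{T}:\mu\mapsto\{(\Phi^\mu_t)_\#\mu_0\}_{t\in[0,T]}$ on the complete metric space of uniformly compactly supported measure trajectories with the metric $\sup_{t\le T}W_1(\cdot,\cdot)$. The key lemma is that $Q$ is Lipschitz in its measure argument: for fixed $(x,v)$ the integrand $(x^*,v^*)\mapsto \lambda_w(x^*-x)+\lambda_x\phi(|x-x^*|)(x^*-x)+\lambda_v\phi(|x-x^*|)(v^*-v)$ is Lipschitz on $K_T$ (again using boundedness and Lipschitzness of $\phi$), so Kantorovich--Rubinstein duality gives $|Q(x,v,\mu_s)-Q(x,v,\nu_s)|\le L\,W_1(\mu_s,\nu_s)$. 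Pushing forward the diagonal coupling of $\mu_0$ with itself and comparing the two characteristic flows driven by $\mu$ and $\nu$ then bounds $\sup_{t\le T}W_1(\mathcal{T}[\mu]_t,\mathcal{T}[\nu]_t)$ by $C\,T\sup_{t\le T}W_1(\mu_t,\nu_t)$, a contraction for $T$ small; iterating over short time intervals produces a solution on all of $[0,T]$.

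The stated stability estimate, which also gives uniqueness, I would obtain by comparing two genuine solutions $\mu_t=(\Phi^\mu_t)_\#\mu_0$ and $\nu_t=(\Phi^\nu_t)_\#\nu_0$. Starting from an optimal plan $\pi_0$ for $(\mu_0,\nu_0)$ and pushing it forward by $(\Phi^\mu_t,\Phi^\nu_t)$ produces an admissible coupling for $(\mu_t,\nu_t)$, so that
\[
W_1(\mu_t,\nu_t)\le \int_{\bbr^{2d}\times\bbr^{2d}}\big|\Phi^\mu_t(z)-\Phi^\nu_t(z')\big|\,\pi_0(dz,dz').
\]
Denoting the right-hand side by $D(t)$ and differentiating the flow difference along characteristics, the estimates $\frac{d}{dt}|X^\mu-X^\nu|\le |V^\mu-V^\nu|$ and $\frac{d}{dt}|V^\mu-V^\nu|\le \big|Q(X^\mu,V^\mu,\mu_t)-Q(X^\nu,V^\nu,\nu_t)\big|$, where the last term splits into a part Lipschitz in $(x,v)$ (yielding $C(|X^\mu-X^\nu|+|V^\mu-V^\nu|)$) plus the measure contribution $L\,W_1(\mu_t,\nu_t)\le L\,D(t)$, close into $D'(t)\le C D(t)$ with $D(0)=W_1(\mu_0,\nu_0)$. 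Gr\"onwall then delivers $W_1(\mu_t,\nu_t)\le D(t)\le e^{CT}W_1(\mu_0,\nu_0)=:C_T\,W_1(\mu_0,\nu_0)$, and the choice $\mu_0=\nu_0$ forces $\mu_t=\nu_t$, i.e.\ uniqueness.

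I expect the main obstacle to be closing this Gr\"onwall loop cleanly: because $W_1(\mu_t,\nu_t)$ appears on the right-hand side through the measure-dependence of $Q$, one must feed back the coupling bound $W_1(\mu_t,\nu_t)\le D(t)$ at each instant and keep all constants uniform on $[0,T]$ despite the exponentially growing support, with the velocity-alignment term $\lambda_v\phi(|x-x^*|)(v^*-v)$ being the most delicate in this respect. A secondary but necessary step is verifying that the constructed push-forward genuinely satisfies Definition \ref{D3.1}: weak continuity follows from the continuity of $t\mapsto\Phi^\mu_t$, and the weak integral identity follows by differentiating $t\mapsto\langle\mu_t,g\rangle$ along the flow for $g\in C_0^1$.
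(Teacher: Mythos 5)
Your proposal is sound, and its stability half coincides in substance with the paper's: the paper also compares the bi-characteristic flows driven by two measure trajectories (Lemma \ref{lem3.10}), uses the boundedness and Lipschitz continuity of $\phi$ together with the Kantorovich--Rubinstein duality to bound the measure-dependence of the force by $W_1$ (Lemmas \ref{L3.8}--\ref{L3.9}, after decomposing $Q$ into the functionals $a$, $b$, $c$ of \eqref{C-0}), and closes a Gr\"onwall loop to get $W_1(\mu_t,\nu_t)\le C_T W_1(\mu_0,\nu_0)$ (Proposition \ref{P3.13}) --- the only cosmetic difference being that the paper works with the dual formulation $\sup_{g\in\Omega}$ rather than with push-forwards of an optimal plan. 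Where you genuinely diverge is the existence mechanism: you run a Banach fixed-point argument for the solution operator $\mu\mapsto(\Phi^\mu_t)_\#\mu_0$ on short time intervals, whereas the paper approximates $\mu_0$ by empirical measures $\mu_0^n=\frac1n\sum_i\delta_{(x_i,v_i)}$, observes that the solutions of the particle system \eqref{A-1} yield exact measure-valued solutions $\mu_t^n$, invokes the stability estimate to show $\{\mu_t^n\}$ is Cauchy in $W_1$, and passes to the limit in the weak formulation (the delicate step being the convergence of the nonlinear term $\langle\mu_s^n,Q(\mu_s^n)\cdot\nabla_v g\rangle$). Your route is more self-contained and avoids the limit-passage in the nonlinear term at the price of checking that the fixed point satisfies Definition \ref{D3.1} and that the trajectory class is preserved by the solution map; the paper's route costs the same stability lemmas but buys the rigorous mean-field limit from \eqref{A-1} to \eqref{A-2} for free, which is one of the paper's stated objectives and would not follow from your construction alone. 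Both arguments rest on the same a priori ingredients (compact support propagation, moment bounds), so your preliminary Gr\"onwall control of the support matches the paper's Lemmas \ref{L3.3} and \ref{L3.5}.
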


Now, we present our results on the herding phenomena of the measure-valued solutions.
Both theorems state that the variation of $x$ and $v$ around the mean subjective price $x_c(t)$ and mean favorability $v_c(t)$ defined as
\[
x_c(t) = \int_{\bbr^{2d}}x\,\mu_t(dx,dv), \quad v_c(t)= \int_{\bbr^{2d}}v\,\mu_t(dx,dv)
\]
vanishes asymptotically.
The first herding estimate (Theorem \ref{T.1.2}) presents the decay of the variations of $x$ and $v$ over a general condition on parameters and communication rate. No decay estimate, however, is available in this case.

\begin{theorem}\label{T.1.2}
Let $\mu_t$ be the measure-valued solution to \eqref{A-2} subjected to initial data $\mu_0$. Assume the communication rate satisfies (\ref{Phi condition})
and
\begin{align*}
		\phi_m \leq \phi(r) \leq \phi_M,
\end{align*}
for some positive constants $\phi_m$ and $\phi_M$.
		Then, $\mu_t$ shows the herding behavior in the following sense:
\[
\lim_{t\to\infty}\int_{\bbr^{2d}}(|x-x_c(t)|^2+|v-v_c(t)|^2)\mu_t(dx,dv)=0.
\]
\end{theorem}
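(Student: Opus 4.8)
The plan is to build a herding-energy (Lyapunov) functional whose only dissipation is the velocity-alignment term, convert its monotone decay into time-integrability of the position and velocity fluctuations, and then apply Barbalat's lemma to each fluctuation separately. Throughout I write $V(t)=\int_{\bbr^{2d}}|v-v_c|^2\,\mu_t$ and $X(t)=\int_{\bbr^{2d}}|x-x_c(t)|^2\,\mu_t$, so that the assertion is exactly $X(t)+V(t)\to0$, and I assume $\lambda_v>0$ and $\kappa:=\lambda_w+\lambda_x\phi_m>0$. The first step is to record the conservation laws by testing the weak formulation of Definition \ref{D3.1} against the observables $v$ and $x$ (admissible after a cutoff, since $\mu_t$ stays compactly supported on finite time intervals). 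The crucial identity is $\int_{\bbr^{2d}}Q\,\mu_t=0$: the $\lambda_w$ part telescopes to $x_c-x_c$, while the $\lambda_x$ and $\lambda_v$ parts vanish because $(x^*-x)$ and $(v^*-v)$ are antisymmetric under the swap $(x,v)\leftrightarrow(x^*,v^*)$ whereas $\phi(|x-x^*|)$ is symmetric. Hence $v_c$ is constant and $x_c(t)=x_c(0)+v_c t$.

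Next, with $\Psi(r)=\int_0^r s\phi(s)\,ds$, I would introduce
\[ \mathcal L(t)=\tfrac12 V(t)+\tfrac{\lambda_w}{4}\iint|x-x^*|^2\,\mu_t\,\mu_t+\tfrac{\lambda_x}{2}\iint\Psi(|x-x^*|)\,\mu_t\,\mu_t. \]
Differentiating along \eqref{A-2} and using the same symmetrization, the conservative $\lambda_w$ and $\lambda_x$ contributions cancel and only the alignment term survives,
\[ \frac{d}{dt}\mathcal L(t)=-\frac{\lambda_v}{2}\iint\phi(|x-x^*|)|v-v^*|^2\,\mu_t\,\mu_t=:-D(t)\le0. \]
Since $\mathcal L\ge0$ it converges, so $\int_0^\infty D\,dt<\infty$, and using $\Psi(r)\ge\tfrac{\phi_m}{2}r^2$ one reads off the a priori bounds $X,V\le C\,\mathcal L(0)$. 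Because $\phi\ge\phi_m$ and $\iint|v-v^*|^2\mu_t\mu_t=2V$, this gives $\int_0^\infty V\,dt<\infty$; as $\dot V$ is bounded (hence $V$ uniformly continuous), Barbalat's lemma yields $V(t)\to0$.

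The main obstacle is the position fluctuation $X$, which does not appear in the dissipation $D$. To reach it I would introduce the cross-correlation $C(t)=\int_{\bbr^{2d}}(x-x_c)\cdot(v-v_c)\,\mu_t$, for which $\dot X=2C$ and a direct computation gives
\[ \dot C=V-\lambda_w X-\tfrac{\lambda_x}{2}\iint\phi|x-x^*|^2\mu_t\mu_t+\tfrac{\lambda_v}{2}\iint\phi(v^*-v)\cdot(x-x^*)\mu_t\mu_t. \]
Bounding the last term by $\sqrt{\lambda_v\phi_M}\,\sqrt{D}\,\sqrt{X}$ (Cauchy--Schwarz, using $\phi\le\phi_M$) and absorbing half of $X$ by Young's inequality, together with $\iint\phi|x-x^*|^2\mu_t\mu_t\ge2\phi_m X$, leads to the differential inequality $\tfrac{\kappa}{2}X\le V-\dot C+\tfrac{\lambda_v\phi_M}{2\kappa}D$. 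Integrating this in time and using $\int_0^\infty V\,dt<\infty$, $\int_0^\infty D\,dt<\infty$, and the boundedness of $C$ (which follows from that of $X$ and $V$), I obtain $\int_0^\infty X\,dt<\infty$. Since $\dot X=2C$ is bounded, $X$ is uniformly continuous, so Barbalat's lemma gives $X(t)\to0$.

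Adding the two limits proves $\lim_{t\to\infty}\int_{\bbr^{2d}}(|x-x_c(t)|^2+|v-v_c(t)|^2)\,\mu_t=0$. Besides the position estimate just described, the only remaining delicate point is justifying the use of the unbounded observables $x$, $v$, $|x-x_c|^2$ and $(x-x_c)\cdot(v-v_c)$ against the measure-valued solution; this I would handle by a cutoff/approximation argument relying on the compact support of $\mu_t$ on finite time intervals and on the uniform-in-time finiteness of the second moments furnished by $\mathcal L$.
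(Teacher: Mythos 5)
Your proposal is correct. The first half coincides with the paper's argument: your $\mathcal L$ is exactly $\tfrac12 E$ with $E=\lambda_w X+V+\lambda_x M$ (note $\Psi=\Phi$ and $\iint|x-x^*|^2\mu_t\mu_t=2X$), the identity $\dot{\mathcal L}=-\tfrac{\lambda_v}{2}V_\phi$ is the paper's Step I, and the deduction $\int_0^\infty V\,dt<\infty$ plus Barbalat giving $V\to0$ is the paper's Step II verbatim.

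Where you genuinely diverge is the position fluctuation. The paper proves $X\to0$ by contradiction: assuming $E^\infty>0$, it shows via Lemma \ref{L3.12}(iv) that $\dot C(X,V)$ is eventually bounded above by the strictly negative constant $-\tfrac{\lambda_w+\lambda_x\phi_m}{2(\lambda_w+\lambda_x\phi_M)}E^\infty$, which forces $X'=2C(X,V)$ to drift by more than the smallness of $C(X,V)\le\sqrt{XV}\to0$ permits. You instead keep the same identity for $\dot C(X,V)$, control the cross term $\tfrac{\lambda_v}{2}C_\phi(X,V)$ by $\sqrt{\lambda_v\phi_M}\sqrt{D}\sqrt{X}$ and Young's inequality, and integrate the resulting inequality $\tfrac{\kappa}{2}X\le V-\dot C+\tfrac{\lambda_v\phi_M}{2\kappa}D$ to conclude $\int_0^\infty X\,dt<\infty$, after which a second application of Barbalat (legitimate, since $\dot X=2C(X,V)$ is bounded) gives $X\to0$. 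Both routes need $\phi_m>0$ and $\lambda_w+\lambda_x\phi_m>0$, and both rest on the same dissipation identity; your version avoids the $\varepsilon$-bookkeeping of the paper's Step III and yields the slightly stronger quantitative by-product $X\in L^1(0,\infty)$, while the paper's contradiction argument avoids the Young absorption and shows directly that the full energy limit $E^\infty$ must vanish. The remaining caveat you flag yourself — justifying the unbounded test functions $x$, $v$, $|x|^2$, $x\cdot v$ against the measure-valued solution by cutoff on the compact support — is handled the same way (implicitly) in the paper, so it is not a gap relative to the paper's own standard of rigor.
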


Our second herding estimate  provides the exponential decay of variation. The price to pay is the restriction on the parameters.
\begin{theorem}\label{T.1.3}
		Let $\mu_t$ be the measure-valued solution to \eqref{A-2} subjected to initial data $\mu_0$.  Assume the communication rate satisfies (\ref{Phi condition})
and
\begin{align*}
		\phi_m \leq \phi \leq \phi_M,
\end{align*}
for some positive constants $\phi_m$ and $\phi_M$. We further assume that $\lambda_x$, $\lambda_v$ and $\lambda_w$ satisfy
		\begin{align}\label{C-12}
		\frac{ \phi_M\lambda_x^2}{\phi_m\lambda_w \lambda_v^2} <\frac{1}{2}\min(1,\phi_M).
		\end{align}
		Then, there exist positive constants $C$ and  $\beta$ such that
		\[\int_{\bbr^{2d}}(|x-x_c(t)|^2+|v-v_c(t)|^2)\mu_t(dx,dv)\le C e^{-\frac{\beta}{2} t},\]
		where $C\equiv C(\mu_0, \lambda_w, \lambda_x, \lambda_v)$ is a positive constant and $\beta\equiv \beta(\phi_m,\phi_M, \lambda_w, \lambda_x, \lambda_v)$ is defined in \eqref{def beta}.
\end{theorem}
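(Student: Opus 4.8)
The plan is to reduce the assertion to a Grönwall inequality for a finite-dimensional Lyapunov functional assembled from the centered second moments of $\mu_t$. Abbreviating $d\mu=\mu_t(dx,dv)$ and $d\mu^*=\mu_t(dx^*,dv^*)$ and setting
\[
X(t)=\int_{\bbr^{2d}}|x-x_c|^2\,d\mu,\quad V(t)=\int_{\bbr^{2d}}|v-v_c|^2\,d\mu,\quad Z(t)=\int_{\bbr^{2d}}(x-x_c)\cdot(v-v_c)\,d\mu,
\]
I would first feed the test functions $x,v,|x|^2,|v|^2,x\cdot v$ into the weak formulation of Definition \ref{D3.1}. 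The antisymmetry of $Q_2,Q_3$ in \eqref{A-4} under the swap $(x,v)\leftrightarrow(x^*,v^*)$, together with $\int(x^*-x)\,d\mu^*=x_c-x$, gives $\dot v_c=0$ and $\dot x_c=v_c$; hence the object to be estimated is exactly $X+V$. Since the means drift, one must differentiate the centered moments, and a direct computation yields the closed system
\[
\dot X=2Z,\qquad \dot V=2\big(-\lambda_w Z+\lambda_x P+\lambda_v I_3\big),\qquad \dot Z=V-\lambda_w X+\lambda_x J_2+\lambda_v P,
\]
where $I_3$, $J_2$ are the diagonal force integrals of $Q_3,Q_2$ and $P=I_2=J_3$ is the single cross term that the $\lambda_x$- and $\lambda_v$-forces share.

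The heart of the argument is a symmetrization in the product variable $d\mu\,d\mu^*$. Swapping $(x,v)\leftrightarrow(x^*,v^*)$ and invoking $\phi(r)=\phi(-r)$ from \eqref{Phi condition} converts the diagonal integrals into sign-definite quantities,
\[
I_3=-\tfrac12\iint\phi(|x-x^*|)\,|v-v^*|^2\,d\mu\,d\mu^*\le-\phi_m V,\quad J_2=-\tfrac12\iint\phi(|x-x^*|)\,|x-x^*|^2\,d\mu\,d\mu^*\le-\phi_m X,
\]
whereas the same manipulation only yields the indefinite bound $|P|\le\phi_M\sqrt{XV}$ by Cauchy--Schwarz. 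Thus velocity fluctuations are damped at rate $\lambda_v\phi_m$ and, once activated through the $Z$-equation, position fluctuations at rate $\lambda_w+\lambda_x\phi_m$, but the two dissipation channels are tied together by the sign-indefinite term $\lambda_x P$.

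I would then test the combined functional $\mathcal H=\lambda_w X+V+\theta Z$ with a small weight $\theta>0$. The combination $\lambda_w X+V$ is chosen precisely so that the uncontrolled $Z$-contributions $\pm2\lambda_w Z$ cancel in its derivative, and the extra term $\theta Z$ is what turns on the position dissipation $J_2$; inserting the signs above gives
\[
\dot{\mathcal H}\le-\theta(\lambda_w+\lambda_x\phi_m)X+(\theta-2\lambda_v\phi_m)V+(2\lambda_x+\theta\lambda_v)P.
\]
The main obstacle is to render this expression negative definite. Estimating the cross term by Young's inequality, $\phi_M\sqrt{XV}\le\frac{\phi_M}{2}(sX+s^{-1}V)$, produces a quadratic form in $(X,V)$ whose splitting parameter $s$ must be large enough to push $P$ into the $V$-dissipation and simultaneously small enough to push it into the $X$-dissipation. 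The interval of admissible $(\theta,s)$ is nonempty exactly when a balance between the indefinite $\lambda_x$-term and the $\lambda_v$- and $\lambda_w$-driven dissipations holds, and this is what the hypothesis \eqref{C-12} encodes: the quadratic dependence on $\lambda_x$ against $\lambda_w\lambda_v^2$ reflects that $\lambda_x$ alone drives $P$, while the ratio $\phi_M/\phi_m$ and the factor $\min(1,\phi_M)$ measure the $\phi$-weighted dissipation against the $\phi$-free restoring force $\lambda_w$.

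Finally, under \eqref{C-12} I obtain a differential inequality $\dot{\mathcal H}\le-\tfrac{\beta}{2}\mathcal H$ with $\beta$ as recorded in \eqref{def beta}; checking that the same smallness makes $\mathcal H$ comparable to $X+V$ (using $|Z|\le\tfrac12(X+V)$, so that $\mathcal H$ is a genuine energy) and applying Grönwall's lemma yields $X+V\le Ce^{-\beta t/2}$, which is the claim. I expect the only point requiring care beyond the algebra to be the justification of these moment identities at the measure-valued level: the polynomial weights are not in $C_0^1$, so one truncates them and passes to the limit using the propagation of compact support (equivalently, boundedness of moments) guaranteed by the construction in Theorem \ref{T.1.1}.
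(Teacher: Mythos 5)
Your overall architecture coincides with the paper's: the same centered moments $X$, $V$, $Z=C(X,V)$, the same derivative identities (Lemma \ref{L3.12}), the same Lyapunov functional $\lambda_w X+V+\theta Z$ with the $\pm 2\lambda_w Z$ cancellation, and the same truncation remark for the polynomial test functions. The gap is in the one step you yourself flag as the main obstacle: the cross term. You pull $C_\phi(X,V)$ out of the $\phi$-weighted integral via $|P|\le\phi_M\sqrt{XV}$ and then apply Young's inequality against the unweighted dissipations $\theta(\lambda_w+\lambda_x\phi_m)X$ and $(2\lambda_v\phi_m-\theta)V$. For the resulting quadratic form in $(\sqrt{X},\sqrt{V})$ to be negative definite you need
\[
(2\lambda_x+\theta\lambda_v)^2\phi_M^2<4\,\theta(\lambda_w+\lambda_x\phi_m)(2\lambda_v\phi_m-\theta)
\]
for some $\theta\in(0,2\lambda_v\phi_m)$, and optimizing in $\theta$ this forces, up to harmless corrections, $\lambda_x^2\phi_M^2<C\lambda_w\lambda_v^2\phi_m^2$. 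This is \emph{not} implied by \eqref{C-12}: take $\phi_M=1$ and $\phi_m$ small with $\lambda_x^2=\phi_m\lambda_w\lambda_v^2/4$; then \eqref{C-12} holds, but the displayed inequality fails for every admissible $\theta$ once $4\phi_m(1+\lambda_x\phi_m/\lambda_w)<1$. So your assertion that the nonemptiness of the admissible $(\theta,s)$ region is ``exactly'' what \eqref{C-12} encodes is unverified and, as stated, false; your route proves the conclusion only under a different and in general more restrictive hypothesis.

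The paper avoids this loss by never leaving the double integral. With $\alpha$ in the role of your $\theta$, it writes the whole contribution of $X_\phi$, $C_\phi$, $V_\phi$ as $-\frac{\alpha\lambda_x}{2}\int\phi\,\bigl[\,L|x-x^*|^2+(\frac{\lambda_v}{\lambda_x}+\frac{2}{\alpha})(x-x^*)\cdot(v-v^*)+\frac{2\lambda_v}{\alpha\lambda_x}|v-v^*|^2\,\bigr]$ and completes the square \emph{pointwise} in $(x-x^*,v-v^*)$, so the cross term is absorbed with the same weight $\phi$ on both sides and no $\phi_M/\phi_m$ ratio is lost at this stage. Two tuning devices make the form nonnegative: (i) a $\theta$-fraction of the restoring term $-\alpha\lambda_w X$ is converted, using $\phi\le\phi_M$, into $-\frac{\theta\alpha\lambda_w}{2\phi_M}X_\phi$, which raises the coefficient of $|x-x^*|^2$ to $L=1+\frac{\theta\lambda_w}{\phi_M\lambda_x}$; and (ii) $\alpha=\frac{2\lambda_x}{(2L-1)\lambda_v}$ is chosen so that the residual coefficient of $|v-v^*|^2$ after completing the square is exactly $(L-1)(\lambda_v/\lambda_x)^2>0$. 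The surviving dissipation $-\frac{\theta\lambda_w\lambda_v^2}{\phi_M\lambda_x^2}\alpha\phi_m V$ then dominates the leftover $+\alpha V$ precisely when $\frac{\phi_M\lambda_x^2}{\phi_m\lambda_w\lambda_v^2}<\theta=\frac12\min(1,\phi_M)$, which is \eqref{C-12}. If you replace your Young step by this weighted completion of squares, the remainder of your argument (equivalence of the functional with $\lambda_w X+V$, Gr\"onwall, and the final passage from $K$ to $V$, which is what produces the exponent $\beta/2$ rather than $\beta$) goes through as you describe.
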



The proof of the existence part mainly follows, up to additional complications, the argument in \cite{HL}. The differentiation from the standard Cucker-Smale type argument arises in the analysis of the asymptotic behavior.
In the Cucker-Smale kinetic equation, the velocity deviation alone satisfies a Gr\"onwall inequality, yielding flocking estimates (See \cite{HL,HT}). In our kinetic herding model, however, due to the presence of $Q_1$ and $Q_2$, the interaction between the $x$ and $v$ variables plays more important role than in the case of the Cucker-Smale model, and as such, the time derivatives of the deviation functionals of $x$ and $v$ give rise to several covariance type terms.

To take care of this issue, we introduce two herding energy functionals $E(t)$ and $K(t)$ by combining the deviation functionals of $x$ and $v$ with an appropriately chosen
potential functional or a covariance functional, that turn out to be non-negative and non-increasing under appropriate conditions on the parameter set.

The decay for $E(t)$ is derived under virtually no restrictions on the parameter set using the Barbalat's lemma. The decay rate, however, is not available in this case. On the other hand, we can close a Gr\"onwall type estimate for another herding functional $K(t)$ to derive an exponential herding rate. Instead, the admissible parameter set is far more restricted in the second case.

\subsection{Literature review}

The collective behavior and emergence of organized motion arising from non-coordinated interactions are getting much attention recently since they are frequently observed in nature and society. Most of the mathematical models suggested to describe such collective behaviors fall into one of the three categories:
particle models, kinetic models and continuum models.

 Particle models consider a group of self-propelled particles following simple rules, such as adjusting the velocity or frequency in comparison to other particles, or keeping  minimal distances from others, to name a few. Typical examples of modeling of swarms of self-propelled particles in nature are the flocking of birds or milling of fish and herding of sheeps. Vicek et al \cite{VCBCS} proposed a discrete model to study the emergence of self-organized behavior by the alignment of directions of moving objects with a constant speed.
In \cite{CS}, the authors introduced so-called the Cucker-Smale model to describe the flocking behavior arising from the adjustment of velocities with respect to the
velocities of other agents in the group. Precise conditions for the flocking and sharp flocking rates of the Cucker-Smale model was derived in \cite{CFRT,HL,HT}.
In \cite{MT}, authors adjusted the Cucker-Smale model in such a way that the clustering configuration of agents can be reflected in the communication rate.
Milling behavior emergent from the combining effect of self-propelling, friction and  attractive-repulsive potential was studied in \cite{CDP,DCBC}.

Such self-propelled particle description also has been successfully applied to describe the collective dynamics in human society.
In \cite{KHKJ}, the emergence of cultural classes is described by the assimilation and distinction between agents, based on the first order C-S model.
The flocking phenomena of stochastic volatility is studied in \cite{ABHKL13,BHKLLY15}. 

In \cite{BCLY},  a particle-herding model \eqref{A-1} is proposed to investigate the investors' herding behaviors in financial
markets, where the terminology `herding' is used to describe individual's tendency to follow others
regardless of one's own opinion. The modelling of the movement of pedestrians in the frame work of self-propelled particle can be found in \cite{DAMPT,HM}.

The kinetic description is also popular for modelling collective dynamics of a large number of agents at the mesoscopic level. For this, a number density function $f$ over the phase space $(x, v)$ is introduced, and the kinetic equation governing the time evolution of $f$ is derived. 
Roughly, one can divide kinetic models into two cases. The first case is the Vlasov type equations obtained by suitable  mean-field limits of aforementioned particle models.
The mean field limit of the aforementioned discrete models in \cite{CKJRF,VCBCS} are derived in \cite{DM}. The mean-field limit of Cucker-Smale models are considered in \cite{CFRT,HL,HT}. We also refer to kinetic models regarding self-propelling particles with attraction and repulsion effects \cite{CCR,CFTV}, and the roosting force \cite{CKMT}.

On the other hand, there are kinetic models that does not arise from the mean-field limit, but from a direct modelling assumptions.
	In \cite{DJT}, herding behaviors of agents in a market is described by an inhomogeneous Boltzmann type kinetic equation using two variables, namely, the agent's estimated asset value and irrationality. In \cite{CPT}, the wealth distribution in the market is modeled based on binary exchanges of their money. The formation of individual's opinion is also modelled in \cite{T} using  opinion variables.
The crowd dynamics in an unbounded domain is dealt with in \cite{BBK}, and generalized to bounded domain case \cite{BG}. There are also several works on
the traffic flows \cite{NS,PH}. For the works on social-behavioral dynamics, see  \cite{BDG} for the modeling of behavioral learning dynamics, and \cite{MBG} for social dynamics.

We keep the reference check on the continuum approach to minimum, since they are out of the scope of this work.
Regarding the hydrodynamic limit of aforementioned kinetic models toward corresponding macroscopic models, we refer to \cite{CCR,DM,FK,HT,KMT}. We  mention the work on the dynamics of interacting particles in fluids \cite{BCHK, BCHK2}.

Our literature review is far from exhaustive since there are extensive literature out there. We refer interested readers to nice surveys and lectures in \cite{BD,CFTV,CHL,DLO,NPT,PT} for further references.\newline\newline



\noindent \textbf{Notations.} Throughout the paper, we will use $|\cdot |$ as the standard $\ell_2$ norm in $\bbr^d$.  The constant $C$ stands for generic constant, which can be different from line to line. We also use $C_{T}$ when it is necessary to show the dependence clearly.

\section{Derivation of Kinetic Equation}

In this section, we first present the formal derivation of mean-field Vlasov-type equation \eqref{A-2} from its particle equation \eqref{A-1} using the formal BBGKY hierarchy for the case of $w(x,t)=x_c(t)$. Rigorous justification in the framework of measure-valued solutions will be given in the next section. First of all, let $f_N:=f_N(x_{1},v_{1},\dots,x_{N},v_{N},t)$ be a $N$-particle distribution satisfying the following Liouville equation:
\begin{align*}
\frac{\partial{f_N}}{\partial{t}}+\sum_{i=1}^{N}v_i \cdot \nabla_{x_i}f_N +\sum_{i=1}^{N} \nabla_{v_i} \cdot \left( Q(x,v)  f_N \right)=0,
\end{align*}
where
\begin{align*}
Q(x,v):&= \frac{\lambda_w}{N}\sum_{j=1}^N(x_j-x_i)\\
&+\frac{\lambda_x}{N}\sum_{j=1}^{N} \phi(|x_i-x_j|)(x_j-x_i)+\frac{\lambda_v}{N}\sum_{j=1}^{N}\phi(|x_i-x_j|)(v_j-v_i).
\end{align*}

In order to obtain a Vlasov-type equation, we take marginal for $N$-particle distribution function $f_N$ with respect to $(x_2,v_2),(x_3,v_3),\ldots,(x_N,v_N)$ to have
\begin{align*}
&\int_{\mathbb{R}^{2(N-1)d}}\frac{\partial{f_N}}{\partial{t}}dx_2 dv_2...dx_N dv_N = \frac{\partial{f_1}}{\partial{t}}(x_1,v_1,t),\cr
&\int_{\mathbb{R}^{2(N-1)d}}\sum_{i=1}^{N}v_i \cdot \nabla_{x_i}f_N dx_2 dv_2...dx_N dv_N = v_1 \cdot \nabla_{x_1}f_1(x_1,v_1,t),\cr
&\int_{\mathbb{R}^{2(N-1)d}}\sum_{i=1}^{N} \nabla_{v_i} \cdot \left(Q(x,v)  f_N \right) dx_2 dv_2...dx_N dv_N\cr
&\quad=\frac{\lambda_w}{N}\int_{\mathbb{R}^{2(N-1)d}}\sum_{i=1}^{N} \nabla_{v_i} \cdot \left(\sum_{j=1}^N(x_j-x_i)  f_N \right) dx_2 dv_2...dx_N dv_N\cr
&\quad+\frac{\lambda_x}{N}\int_{\mathbb{R}^{2(N-1)d}}\sum_{i=1}^{N} \nabla_{v_i} \cdot \left(\sum_{j=1}^N\phi(|x_j-x_i|)(x_j-x_i)  f_N \right) dx_2 dv_2...dx_N dv_N
\end{align*}
\begin{align*}
&\quad+\frac{\lambda_v}{N}\int_{\mathbb{R}^{2(N-1)d}}\sum_{i=1}^{N} \nabla_{v_i} \cdot \left(\sum_{j=1}^N\phi(|x_j-x_i|)(v_j-v_i)  f_N \right) dx_2 dv_2...dx_N dv_N\cr
&\quad=:\mathcal{I}_{11}+\mathcal{I}_{12}+\mathcal{I}_{13}.
\end{align*}
We now impose the assumption that $f_N$ is invariant under changing particle labels with the mean-field limit, $N \rightarrow \infty$:
\[f_1(x_1,v_1,t)=f_1(x_2,v_2,t)=\cdots=f_1(x_N,v_N,t),\]
\[f_2(x_1,v_1,x_2,v_2,t)=f_2(x_1,v_1,x_3,v_3,t)=\cdots=f_2(x_1,v_1,x_N,v_N,t),\]
and adopt the \textit{molecular chaos assumption}:
\[f_2(x_1,v_1,x_2,v_2,t)=f_1(x_1,v_1,t)f_1(x_2,v_2,t),\]
to compute the three terms in $\mathcal{I}_{1i}$, $i=1,2,3$ as
\begin{align*}
\mathcal{I}_{11}&= \frac{\lambda_w}{N}\nabla_{v_1} \cdot \int_{\mathbb{R}^{2(N-1)d}} \sum_{j=1}^{N}(x_j-x_1) f_N dx_2 dv_2...dx_N dv_N\\
&=\frac{N-1}{N}\lambda_w \nabla_{v_1} \cdot \int_{\mathbb{R}^{2d}}   (x_2-x_1) f_2 dx_2 dv_2\\
&=\frac{N-1}{N}\lambda_w \nabla_{v_1} \cdot \int_{\mathbb{R}^{2d}}   (x_2-x_1) f_1(x_1,v_1,t)f_1(x_2,v_2,t)\,dx_2 dv_2,
\end{align*}	
\begin{align*}
\mathcal{I}_{12}&=\frac{N-1}{N}\lambda_x\nabla_{v_1} \cdot \int_{\mathbb{R}^{2d}}\phi(x_1-x_2)(x_2-x_1) f_2 \,dx_2 dv_2\\
&=\frac{N-1}{N}\lambda_x\nabla_{v_1} \cdot \int_{\mathbb{R}^{2d}}\phi(x_1-x_2)(x_2-x_1) f_1(x_1,v_1,t)f_1(x_2,v_2,t)\,dx_2 dv_2
\end{align*}
and
\begin{align*}
\mathcal{I}_{13}&=\frac{N-1}{N}\lambda_v\nabla_{v_1} \cdot \int_{\mathbb{R}^{2d}}\phi(x_1-x_2)(v_2-v_1) f_2 \,dx_2 dv_2\\
&=\frac{N-1}{N}\lambda_v\nabla_{v_1} \cdot \int_{\mathbb{R}^{2d}}\phi(x_1-x_2)(v_2-v_1) f_1(x_1,v_1,t)f_1(x_2,v_2,t)\,dx_2 dv_2.\\
\end{align*}
We now take the limit $N \rightarrow \infty$ to obtain the limiting function $f(x,v,t):=\lim\limits_{N\rightarrow\infty} f_1(x_1,v_1,t)$ satisfying the following Vlasov-type equation
\[
\frac{\partial{f}}{\partial{t}}+v \cdot \nabla_{x}f +\nabla_{v} \cdot \big(Q(f)f\big)=0
\]
with the non-local operator $Q$ defined in \eqref{A-3}--\eqref{A-4}.  

\section{Mean-field limit and measure-valued solutions}
In the previous section, we figured out that \eqref{A-2} is the correct kinetic equation for the particle herding model \eqref{A-1} through the formal BBGKY argument.
With the knowledge of the exact form of the kinetic model for \eqref{A-1}, we prove in this section that the Vlasov type equation \eqref{A-2} really  is the kinetic limit of \eqref{A-1}, by showing that the empirical measure constructed from \eqref{A-1} converges in a Wasserstein metric to the distribution measure satisfying \eqref{A-2} in the sense of the measure-valued solution defined in Definition \ref{D3.1}.
For simplicity, we normalize the interaction strengths:  $\lambda_w=\lambda_x=\lambda_v=1$ throughout this section.

\subsection{Preliminary estimates}
In this subsection, we provide several \textit{a priori} estimates on the moments. We start with the conservation laws.
	\begin{lemma}\label{L3.2}
		Let $\mu_t\in L^\infty([0,T);\mathcal{M}(\bbr^{2d}))$ be a compact supported measure-valued solution of \eqref{A-2}. Then,
		\[\frac{d}{dt}\int_{\bbr^{2d}}\mu_t(dx,dv)=0,\quad \frac{d}{dt}\int_{\bbr^{2d}}x\mu_t(dx,dv)=\int_{\bbr^{2d}}v\mu_t(dx,dv),\quad \frac{d}{dt}\int_{\bbr^{2d}}v\mu_t(dx,dv)=0.\]
	\end{lemma}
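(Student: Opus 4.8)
The plan is to read off each identity from the weak (integral) formulation in Definition~\ref{D3.1} by inserting the three test functions $g\equiv 1$, $g(x,v)=x_k$ and $g(x,v)=v_k$ (for $k=1,\dots,d$), and then to differentiate in $t$. Since none of these is compactly supported while the admissible class is $C^1_0(\bbr^{2d}\times[0,T))$, I would first fix $t<T$ and use the compact-support hypothesis: the supports $\operatorname{supp}\mu_s$ for $s\in[0,t]$ lie in a common ball $B_R$ with $R=R(t)<\infty$. I then replace each candidate $\psi\in\{1,x_k,v_k\}$ by $g(x,v,s)=\psi(x,v)\chi_R(x,v)\theta(s)$, where $\chi_R\in C^1_0(\bbr^{2d})$ satisfies $\chi_R\equiv 1$ on $B_R$ and $\theta\in C^1_0([0,T))$ satisfies $\theta\equiv 1$ on $[0,t]$. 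On $\operatorname{supp}\mu_s$ for $s\in[0,t]$ both cutoffs equal one and their derivatives vanish, so all the identities below are insensitive to the truncation, and $g$ is now genuinely admissible.

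With $g\equiv 1$ every derivative term on the right-hand side vanishes, so the integral equation gives $\<\mu_t,1\>=\<\mu_0,1\>$; because the right-hand side is continuous in $t$ (by the weak continuity in part (1) of Definition~\ref{D3.1}), the fundamental theorem of calculus yields $\frac{d}{dt}\<\mu_t,1\>=0$, the first identity. With $g=x_k$ one has $\partial_s g=0$, $\nabla_x g=e_k$ and $\nabla_v g=0$, so the integral equation reduces to $\<\mu_t,x_k\>-\<\mu_0,x_k\>=\int_0^t\<\mu_s,v_k\>\,ds$; differentiating in $t$ gives $\frac{d}{dt}\<\mu_t,x_k\>=\<\mu_t,v_k\>$, which is the second identity componentwise.

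For the third identity I take $g=v_k$, so that $\nabla_x g=0$, $\nabla_v g=e_k$, and the integral equation becomes $\<\mu_t,v_k\>-\<\mu_0,v_k\>=\int_0^t\<\mu_s,Q_k(\cdot,\cdot,\mu_s)\>\,ds$, where $Q_k$ is the $k$-th component of the force of Definition~\ref{D3.1} (with $\lambda_w=\lambda_x=\lambda_v=1$). The key step is to show $\<\mu_s,Q_k(\cdot,\cdot,\mu_s)\>=0$ for each fixed $s$. Writing this out as the double integral
\[
\int_{\bbr^{2d}}\!\!\int_{\bbr^{2d}}\Big[(x^*-x)+\phi(|x-x^*|)(x^*-x)+\phi(|x-x^*|)(v^*-v)\Big]_k\,\mu_s(dx^*,dv^*)\,\mu_s(dx,dv),
\]
I would relabel the two integration variables $(x,v)\leftrightarrow(x^*,v^*)$: each bracketed term is antisymmetric under this swap, since $x^*-x$ and $v^*-v$ change sign while the weight $\phi(|x-x^*|)$ is unchanged (because $|x-x^*|=|x^*-x|$, consistent with the evenness in \eqref{Phi condition}). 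Hence the double integral equals its own negative and therefore vanishes; the compact support of $\mu_s$ provides the absolute integrability that makes this relabelling legitimate. Thus the integrand on the right-hand side is identically zero, and differentiating gives $\frac{d}{dt}\<\mu_t,v_k\>=0$.

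The only genuine subtleties I anticipate are bookkeeping: justifying the truncation of the non-integrable test functions $1,x_k,v_k$ (which rests on the uniform boundedness of $\operatorname{supp}\mu_s$ over $[0,t]$ supplied by the compact-support hypothesis) and justifying the change of variables in the double integral (again from that integrability). The substantive point is the antisymmetrization in the last step: the symmetry of the communication weight forces the total herding force to carry zero net momentum, which is precisely what yields conservation of the mean favorability $v_c$.
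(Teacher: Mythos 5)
Your proof is correct and takes essentially the same route as the paper, whose entire argument is the one-line remark that the identities follow from Definition~\ref{D3.1}~(2) with $g:=1,x,v$. You have merely supplied the details the paper leaves implicit, namely the truncation of the non-compactly-supported test functions (legitimate by the compact-support hypothesis) and the antisymmetrization of the double integral showing $\<\mu_s,Q(\cdot,\cdot,\mu_s)\>=0$.
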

	\begin{proof}
		It comes directly from the Definition \ref{D3.1} (2)  by taking $g(x,v,t):=1,x,v$.
	\end{proof}
	\begin{remark}\label{R3.1}
		From the Lemma \ref{L3.2}, we have
		\[\int_{\bbr^{2d}}\mu_t(dx,dv)=\int_{\bbr^{2d}}\mu_0(dx,dv)=: m_0,\quad \int_{\bbr^{2d}}v\mu_t(dx,dv)=\int_{\bbr^{2d}}v\mu_0(dx,dv)=:m_1.\]
	\end{remark}
	For brevity, we put $m_0=1$ and define three functionals $a,b,c$ as follows:
	\begin{align}
	\begin{aligned}\label{C-0}
	Q(x,v,\mu_s)&=\left[\int_{\bbr^{2d}} x^*\mu_s(dx^*,dv^*)\right]+\left[\int_{\bbr^{2d}}\phi(|x-x^*|)(x^*+v^*)\mu_s(dx^*,dv^*)\right]\\
	&-\left[1+\int_{\bbr^{2d}}\phi(|x-x^*|)\mu_s(dx^*,dv^*)\right]x-\left[\int_{\bbr^{2d}}\phi(|x-x^*|)\mu_s(dx^*,dv^*)\right]v\\
	&=:a(\mu_s)+b(x,\mu_s)-(1+c(x,\mu_s))x-c(x,\mu_s)v.
	\end{aligned}
	\end{align}

In the following lemma, we show that the second moment of measure-valued solution is bounded.
\begin{lemma}\label{L3.3}
Let $\mu_t\in L^\infty([0,T);\mathcal{M}(\bbr^{2d}))$ be a compact-supported measure-valued solution of \eqref{A-2}. Then,
\[\int_{\bbr^{2d}}(|x|^2+|v|^2)\mu_t(dx,dv)\le C_T.\]
\end{lemma}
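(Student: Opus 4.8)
The plan is to track the evolution of the second moment
\[
M(t):=\int_{\bbr^{2d}}(|x|^2+|v|^2)\,\mu_t(dx,dv)
\]
by inserting $g(x,v)=|x|^2+|v|^2$ into the weak formulation of Definition \ref{D3.1}(2), and then to close a Gr\"onwall inequality of the form $M(t)\le M(0)+\int_0^t\big(C_1+C_2M(s)\big)\,ds$. Since $g\notin C_0^1(\bbr^{2d})$, I would first work with the truncated test functions $g_R(x,v)=\zeta_R(x,v)(|x|^2+|v|^2)$, where $\zeta_R$ is a smooth cutoff equal to $1$ on the ball of radius $R$ and vanishing outside radius $2R$. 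For each fixed $R$ the identity in Definition \ref{D3.1}(2) applies verbatim; because $\mu_s$ is compactly supported, letting $R\to\infty$ removes the cutoff, since the error terms produced by $\nabla\zeta_R$ are supported in the annulus $R\le|(x,v)|\le 2R$ and vanish once $R$ exceeds the support radius. This yields
\[
M(t)-M(0)=\int_0^t\int_{\bbr^{2d}}\big(2\,x\cdot v+2\,Q(x,v,\mu_s)\cdot v\big)\,\mu_s(dx,dv)\,ds.
\]

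Next I would insert the decomposition \eqref{C-0}, namely $Q=a(\mu_s)+b(x,\mu_s)-(1+c(x,\mu_s))x-c(x,\mu_s)v$, into $Q\cdot v$ and estimate the resulting terms one by one using $0\le\phi\le\phi_M$ together with the conserved quantities from Lemma \ref{L3.2} and Remark \ref{R3.1}: the mass is $\equiv 1$, the momentum is $\equiv m_1$, and $a(\mu_s)=x_c(s)=x_c(0)+m_1 s$ is bounded by $|x_c(0)|+|m_1|T$ on $[0,T]$. The key pointwise bounds are $0\le c(x,\mu_s)\le\phi_M$ and, by Cauchy--Schwarz against the unit-mass measure, $|b(x,\mu_s)|\le\phi_M\int(|x^*|+|v^*|)\,\mu_s\le 2\phi_M\,M(s)^{1/2}$ uniformly in $x$. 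Discarding the favorable dissipative contribution $-2c|v|^2\le 0$ and applying $|x||v|\le\tfrac12(|x|^2+|v|^2)$ together with $|v|\le\tfrac12(1+|v|^2)$ to the remaining cross terms, every surviving term is controlled by a constant plus a multiple of $M(s)$, so that the integrand above is bounded by $C_1+C_2M(s)$ with $C_1=C_1(T,\phi_M,x_c(0),m_1)$ and $C_2=C_2(\phi_M)$.

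Gr\"onwall's inequality then gives $M(t)\le (M(0)+C_1 t)\,e^{C_2 t}\le C_T$ for all $t\in[0,T)$, which is the assertion; the explicit $T$-dependence enters solely through the linear-in-time growth of the mean price $x_c(s)$ appearing in $a(\mu_s)$.

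I expect the genuine obstacle to lie in the rigorous justification of the truncation step rather than in the algebra: one must check that the cutoff error terms really vanish in the limit, which rests on the compact-support hypothesis and, should a uniform-in-$[0,T]$ support bound be required, on a companion propagation-of-support estimate. Once the weak identity for $g=|x|^2+|v|^2$ is legitimate, the remaining work is bookkeeping, organizing the $x$--$v$ coupling terms so each is absorbed into $C_1+C_2M(s)$, which is routine given the uniform bound $\phi\le\phi_M$ and the conserved moments.
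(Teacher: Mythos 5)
Your proposal is correct and follows essentially the same route as the paper: differentiate the second moment via the weak formulation, bound the resulting terms using $0\le\phi\le\phi_M$, Cauchy--Schwarz, and the conserved mass/momentum, and close with Gr\"onwall. The only differences are cosmetic --- you organize the force through the $a,b,c$ decomposition of \eqref{C-0} while the paper expands $Q_1,Q_2,Q_3$ directly and symmetrizes the $Q_3$ term to exhibit the nonpositive dissipation, and you are more explicit than the paper about justifying the unbounded test function $|x|^2+|v|^2$ by a cutoff, which is a reasonable extra care given the compact-support hypothesis.
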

\begin{proof}
We use the definition of measure-valued solution to obtain
\[\frac{d}{dt}\int_{\bbr^{2d}}|x|^2\mu_t(dx,dv)=2\int_{\bbr^{2d}}x\cdot v\mu_t(dx,dv)\]
and
\begin{align*}
&\frac{d}{dt}\int_{\bbr^{2d}}|v|^2\mu_t(dx,dv)\\
&=2\int_{\bbr^{2d}}v\cdot Q(x,v,\mu_t)\mu_t(dx,dv)=2\int_{\bbr^{2d}}v\cdot\Bigg[\int_{\bbr^{2d}}(x^*-x)\mu_t(dx^*,dv^*)\\
&+\int_{\bbr^{2d}}\phi(|x-x^*|)(x^*-x)\mu_t(dx^*,dv^*)+\int_{\bbr^{2d}}\phi(|x-x^*|)(v^*-v)\mu_t(dx^*,dv^*)\Bigg]\,\mu_t(dx,dv)\\
&=2\left(\int_{\bbr^{2d}}v\mu_t(dx,dv)\right)\left(\int_{\bbr^{2d}}x^*\mu_t(dx^*,dv^*)\right)-2\int_{\bbr^{2d}}x\cdot v\mu_t(dx,dv)\\
&+2\int_{\bbr^{4d}}\phi(|x-x^*|)v\cdot(x^*-x)\mu_t(dx,dv)\mu_t(dx^*,dv^*)\\
&-\int_{\bbr^{4d}}\phi(|x-x^*|)|v^*-v|^2\mu_t(dx,dv)\mu_t(dx^*,dv^*).
\end{align*}
Therefore, we have
\begin{align*}
\frac{d}{dt}&\int_{\bbr^{2d}}(|x|^2+|v|^2)\mu_t(dx,dv)\\
&\le 2|m_1|\int_{\bbr^{2d}}|x|\mu_t(dx,dv)+2\phi_M\left(\int_{\bbr^{2d}}|v|\mu_t(dx,dv)\right)\left(\int_{\bbr^{2d}}|x^*|\mu_t(dx^*,dv^*)\right)\\
&+2\phi_M\int_{\bbr^{2d}}|xv|\mu_t(dx,dv)\le C\int_{\bbr^{2d}}(|x|^2+|v|^2)\mu_t(dx,dv),
\end{align*}
where we use the Cauchy-Schwartz inequality in the last inequality. Hence, we conclude that
\begin{equation*}
\int_{\bbr^{2d}}(|x|^2+|v|^2)\mu_t(dx,dv)\le e^{CT}\int_{\bbr^{2d}}(|x|^2+|v|^2)\mu_0(dx,dv).
\end{equation*}
\end{proof}

\begin{lemma}\label{L3.4}
Let $a,b,c$ be the functionals defined in \eqref{C-0}.
 Then, the following estimates hold:
\[|a(\mu_t)| \le C_T,\quad |b(x,\mu_t)|\le 2\phi_MC_T,\quad |c(x,\mu_t)|\le \phi_M.\]

\end{lemma}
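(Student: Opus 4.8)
The plan is to estimate each of the three functionals directly from its integral representation in \eqref{C-0}, using only three ingredients already at hand: the uniform upper bound $0\le\phi\le\phi_M$ from the hypothesis of Theorem \ref{T.1.1}, the mass normalization $m_0=1$ recorded in Remark \ref{R3.1}, and the second-moment bound $\int_{\bbr^{2d}}(|x|^2+|v|^2)\,\mu_t(dx,dv)\le C_T$ from Lemma \ref{L3.3}.

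I would begin with $c$, since it is the cleanest. As $c(x,\mu_t)=\int_{\bbr^{2d}}\phi(|x-x^*|)\,\mu_t(dx^*,dv^*)$ and $0\le\phi\le\phi_M$, one pulls $\phi_M$ outside the integral and is left with the total mass, giving $|c(x,\mu_t)|\le\phi_M\int_{\bbr^{2d}}\mu_t(dx^*,dv^*)=\phi_M m_0=\phi_M$. Note that this bound is uniform in both $x$ and $t$.

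For $a$ and $b$ the first-order moments $\int_{\bbr^{2d}}|x^*|\,\mu_t$ and $\int_{\bbr^{2d}}|v^*|\,\mu_t$ enter. The key step is to control these by the second moment via the Cauchy--Schwarz inequality against the constant $1$: for instance $\int_{\bbr^{2d}}|x^*|\,\mu_t(dx^*,dv^*)\le\big(\int_{\bbr^{2d}}|x^*|^2\,\mu_t\big)^{1/2}m_0^{1/2}\le\sqrt{C_T}$, and likewise for $|v^*|$, where I use $|x^*|^2\le|x^*|^2+|v^*|^2$ to invoke Lemma \ref{L3.3}. Then $|a(\mu_t)|=\big|\int_{\bbr^{2d}}x^*\,\mu_t\big|\le\int_{\bbr^{2d}}|x^*|\,\mu_t\le\sqrt{C_T}$, and, since $\phi\le\phi_M$, $|b(x,\mu_t)|\le\phi_M\int_{\bbr^{2d}}(|x^*|+|v^*|)\,\mu_t\le2\phi_M\sqrt{C_T}$. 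Absorbing the harmless factor $\sqrt{C_T}$ into the generic time-dependent constant yields the stated bounds $|a(\mu_t)|\le C_T$ and $|b(x,\mu_t)|\le2\phi_M C_T$.

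I do not expect any genuine obstacle here: the statement is a direct consequence of Lemma \ref{L3.3} together with the boundedness of $\phi$, and the compact support of $\mu_t$ guarantees that every integral involved is finite so that the manipulations are legitimate. The only point worth flagging is that the constant is time-dependent (through the factor $e^{CT}$ hidden in $C_T$), so the bounds for $a$ and $b$ deteriorate as $T\to\infty$; this is consistent with the notation $C_T$ used in the statement, and only the bound on $c$ is truly time-uniform.
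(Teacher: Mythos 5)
Your proposal is correct and follows exactly the route the paper takes: the paper's proof is a one-line remark that the bounds follow from the Cauchy--Schwarz inequality and Lemma \ref{L3.3}, which is precisely the argument you spell out (bounding first moments by $\sqrt{C_T}$ via Cauchy--Schwarz against the unit mass, and pulling out $\phi_M$). Nothing to add.
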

\begin{proof}
This is straightforward if one uses Cauchy-Schwartz inequality and Lemma \ref{L3.3}.
\end{proof}

Now, for $(x,v,t)\in \bbr^d\times\bbr^d\times[0,T)$ and $\mu\in L^\infty([0,T);\mathcal{M}(\bbr^{2d}))$, we define the bi-characteristic curve which passes $(x,v)$ at time $t$ by
\begin{align}
\begin{aligned}\label{C-1}
\frac{d}{ds}X_\mu(s;t,x,v)&=V_\mu(s;t,x,v),\quad 0<s\le T,\\
\frac{d}{ds}V_\mu(s;t,x,v)&=Q\big(X_\mu(s;t,x,v),V_\mu(s;t,x,v),\mu_s\big),
\end{aligned}
\end{align}
where $Q$ is defined in \eqref{C-0}.
Moreover, we define the radius of supports of the measure-valued solutions as follows:
\[R_x(t):=\sup \{|x|~|~\mbox{$\exists~v$ s.t.~$(x,v)\in~$supp$(\mu_t)$}\},\quad R_v(t):=\sup \{|v|~|~\mbox{$\exists~x$ s.t.~$(x,v)\in~$supp$(\mu_t)$}\}.\]

Now, we estimate the support of a measure-valued solution in the next lemma.
\begin{lemma}\label{L3.5}
Let $\mu_t\in L^\infty([0,T);\mathcal{M}(\bbr^{2d}))$ be a compact supported measure-valued solution of \eqref{A-2} which satisfies the uniform boundedness of moments:
\[\int_{\bbr^{2d}}\mu_t(dx,dv)= 1,\quad \int_{\bbr^{2d}}|v|^2\mu_t(dx,dv)\le m_2.\]
Then, the support of the measure-valued solution is uniformly bounded by constant depending on $t$:
\[R_x(s)<C(t),\quad R_v(s)<C(t),\quad 0\le s\le t.\]
\end{lemma}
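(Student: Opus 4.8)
The plan is to control the growth of the support radii $R_x(s)$ and $R_v(s)$ along the bi-characteristic flow defined in \eqref{C-1}. Since the support of $\mu_s$ is transported by the characteristic curves $(X_\mu, V_\mu)$, any point $(x,v) \in \mathrm{supp}(\mu_s)$ arises as the image at time $s$ of a point in $\mathrm{supp}(\mu_0)$, and consequently
\[
R_x(s) \le \sup_{(x,v)\in \mathrm{supp}(\mu_0)} |X_\mu(s;0,x,v)|, \qquad R_v(s) \le \sup_{(x,v)\in \mathrm{supp}(\mu_0)} |V_\mu(s;0,x,v)|.
\]
So it suffices to derive a differential inequality for the norms of a single trajectory and then take the supremum over the (compact) initial support.

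First I would differentiate $|X_\mu(s)|$ and $|V_\mu(s)|$ in $s$. From the first equation in \eqref{C-1} we immediately get $\frac{d}{ds}|X_\mu| \le |V_\mu| \le R_v(s)$. For the velocity component, I would use the decomposition of $Q$ from \eqref{C-0}, namely $Q = a(\mu_s) + b(x,\mu_s) - (1+c(x,\mu_s))x - c(x,\mu_s)v$, together with the bounds from Lemma \ref{L3.4}, which give $|a(\mu_s)| \le C_T$, $|b(x,\mu_s)| \le 2\phi_M C_T$, and $|c(x,\mu_s)| \le \phi_M$. Evaluating along the characteristic at the point $(X_\mu(s), V_\mu(s))$, these yield
\[
\frac{d}{ds}|V_\mu| \le |Q| \le C_T + 2\phi_M C_T + (1+\phi_M)|X_\mu| + \phi_M |V_\mu|.
\]
Setting $R(s) := 1 + |X_\mu(s)| + |V_\mu(s)|$ and adding the two inequalities, I expect a closed linear differential inequality of the form $\frac{d}{ds} R(s) \le C_T\, R(s)$, where the constant $C_T$ absorbs $\phi_M$ and the moment bounds guaranteed by Lemma \ref{L3.3} (which is where the hypotheses $\int \mu_t = 1$ and $\int |v|^2 \mu_t \le m_2$ enter, ensuring the functionals $a,b,c$ are genuinely bounded uniformly on $[0,t]$).

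Applying Gr\"onwall's lemma then gives $R(s) \le R(0)\, e^{C_T s}$ for $0 \le s \le t$, and since $R(0)$ is finite by the compactness of $\mathrm{supp}(\mu_0)$, taking the supremum over initial data in the support produces the desired bound $R_x(s), R_v(s) < C(t)$ for all $0 \le s \le t$. The main technical point to be careful about is the self-consistency of the argument: the field $Q$ along the trajectory depends on $\mu_s$ through the functionals $a,b,c$, so I must ensure the bounds in Lemma \ref{L3.4} hold uniformly on the whole interval $[0,t]$ rather than instantaneously. Since Lemma \ref{L3.3} supplies a uniform-in-$s$ second-moment bound $C_T$ on $[0,T)$, these functional bounds are indeed uniform, and the Gr\"onwall argument closes without circularity. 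A minor cleanliness issue is that $|V_\mu|$ and $|X_\mu|$ may fail to be differentiable where they vanish, which is handled in the standard way by working with $|X_\mu|^2 + |V_\mu|^2$ or by using the upper Dini derivative.
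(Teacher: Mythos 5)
Your proposal is correct and follows essentially the same route as the paper: evaluating $Q$ along the bi-characteristics via the decomposition $Q=a+b-(1+c)x-cv$, invoking the bounds of Lemma \ref{L3.4} (which rest on Lemma \ref{L3.3}), adding the two differential inequalities for $|X_\mu|$ and $|V_\mu|$, and closing with Gr\"onwall. Your extra remarks on non-differentiability at the origin and on the uniformity of the functional bounds are sensible refinements of the same argument, not a different one.
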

\begin{proof}
Consider the characteristic curve for velocity \eqref{C-1}$_2$ which starts at $(x,v)$ on time $t=0$:
\begin{align*}
\frac{d}{ds}V_\mu(s;0,x,v)&=a(\mu_s)+b(X_\mu(s;0,x,v),\mu_s)\\
&-(1+c(X_\mu(s;0,x,v),\mu_s))X_\mu(s;0,x,v)-c(X_\mu(s;0,x,v),\mu_s)V_\mu(s;0,x,v).
\end{align*}
Therefore, we have
\begin{align*}
&\frac{d}{ds}|X_\mu(s;0,x,v)|\le |V_\mu(s;0,x,v)|,\\
&\frac{d}{ds}|V_\mu(s;0,x,v)|\le C_T+2\phi_MC_T+(1+\phi_M)|X_\mu(s;0,x,v)|+\phi_M|V_\mu(s;0,x,v)|.
\end{align*}
We add two inequalities to get
\[\frac{d}{ds}(|X_\mu(s;0,x,v)|+|V_\mu(s;0,x,v)|)\le C_T+C_T(|X_\mu(s;0,x,v)|+|V_\mu(s;0,x,v)|).\]
Then, the Gr\"onwall lemma gives
\[\sup_{0\le t\le T}\big(|X_\mu(t;0,x,v)|+|V_\mu(t;0,x,v)|\big)\le C_T,\]
which implies the boundedness of support of the measure-valued solution.
\end{proof}

\begin{lemma}\label{L3.6}
Let $\mu_t\in L^\infty([0,T);\mathcal{M}(\bbr^{2d}))$ be a measure-valued solution of \eqref{A-2}. Then for any test function $h\in C^1_0(\bbr^{2d})$,
\[\int_{\bbr^{2d}}h(x,v)\mu_t(dx,dv)=\int_{\bbr^{2d}}h(X_\mu(t;s,x,v),V_\mu(t;s,x,v))\mu_s(dx,dv).\]
\end{lemma}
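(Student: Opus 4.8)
The statement asserts that the measure-valued solution is transported by the characteristic flow: writing $\Phi_{\sigma,\tau}$ for the flow map $(x,v)\mapsto(X_\mu(\tau;\sigma,x,v),V_\mu(\tau;\sigma,x,v))$ generated by the bi-characteristic system \eqref{C-1}, which sends a state at time $\sigma$ to its state at time $\tau$, the claim is precisely that $\mu_t=(\Phi_{s,t})_\#\mu_s$. The plan is to run the method of characteristics at the level of the weak formulation in Definition \ref{D3.1}(2): I would propagate the fixed test function $h$ \emph{backward} along the bi-characteristics to build a time-dependent test function that is constant along trajectories, hence annihilates the transport operator, and then feed it into the weak formulation so that the right-hand side collapses to zero.

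Concretely, fix $t\in[0,T)$ and set
\[
g(x,v,\tau):=h\big(X_\mu(t;\tau,x,v),V_\mu(t;\tau,x,v)\big)=h\big(\Phi_{\tau,t}(x,v)\big),\qquad \tau\in[0,t].
\]
First I would upgrade the identity in Definition \ref{D3.1}(2), written with lower endpoint $0$, to an identity between two arbitrary times $s\le t$: subtracting the relation on $[0,s]$ from the one on $[0,t]$ gives
\[
\<\mu_t,g(\cdot,\cdot,t)\>-\<\mu_s,g(\cdot,\cdot,s)\>=\int_s^t\<\mu_\tau,\partial_\tau g+v\cdot\nabla_x g+Q\cdot\nabla_v g\>\,d\tau.
\]
The crucial step is to verify that $g$ solves the transport equation. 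By the flow (semigroup) property $\Phi_{\tau,t}\circ\Phi_{\sigma,\tau}=\Phi_{\sigma,t}$ of the system \eqref{C-1}, the value of $g$ along the bi-characteristic $\tau\mapsto\Phi_{\sigma,\tau}(y,w)$ equals $h(\Phi_{\sigma,t}(y,w))$, which is independent of $\tau$. Differentiating this constant in $\tau$ and substituting \eqref{C-1} yields
\[
\partial_\tau g+v\cdot\nabla_x g+Q(x,v,\mu_\tau)\cdot\nabla_v g=0
\]
at every point, so the integrand above vanishes identically. Since $g(\cdot,\cdot,t)=h$ while $g(\cdot,\cdot,s)=h(X_\mu(t;s,\cdot,\cdot),V_\mu(t;s,\cdot,\cdot))$, the displayed identity reduces to exactly the claimed formula.

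The hard part will be justifying that $g$ is an admissible test function, namely $g\in C^1_0(\bbr^{2d}\times[0,T))$. The $C^1$ regularity rests on the smoothness of the flow $\Phi_{\tau,t}$ in its initial data and time parameter, which follows from standard ODE theory once one checks that the forcing $Q(\cdot,\cdot,\mu_\tau)$ is $C^1$ in $(x,v)$ and continuous in $\tau$; the former holds because $\phi$ is even and smooth, so $x\mapsto\phi(|x-x^*|)$, and hence the functionals $b,c$ in \eqref{C-0}, are smooth, while the latter follows from the weak continuity of $\tau\mapsto\mu_\tau$. The support of $g(\cdot,\cdot,\tau)$ is the preimage $\Phi_{t,\tau}(\mathrm{supp}\,h)$ of the compact set $\mathrm{supp}\,h$, which remains within a fixed bounded set over $\tau\in[0,t]$ by the a priori support estimate of Lemma \ref{L3.5}; thus $g$ is compactly supported in space, uniformly in time. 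With these regularity facts established, the argument closes.
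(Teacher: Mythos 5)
Your proposal is correct and follows essentially the same route as the paper: both define $g(x,v,\tau)=h(X_\mu(t;\tau,x,v),V_\mu(t;\tau,x,v))$, observe that constancy of $g$ along the bi-characteristics forces $\partial_\tau g+v\cdot\nabla_xg+Q\cdot\nabla_vg=0$, and then insert $g$ into the weak formulation to collapse the time integral and obtain $\<\mu_t,g(\cdot,\cdot,t)\>=\<\mu_s,g(\cdot,\cdot,s)\>$. Your additional checks on the admissibility of $g$ (its $C^1$ regularity via the flow's dependence on data and its uniform compact support via Lemma \ref{L3.5}) are details the paper leaves implicit, but they do not change the argument.
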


\begin{proof}
For any $h\in C_0^1(\bbr^{2d})$, we define
\[g(x^*,v^*,\tau):=h(X_\mu(t;\tau,x^*,v^*),V_\mu(t;\tau,x^*,v^*)),\]
so that
\begin{equation}\label{C-2}
g(X_\mu(\tau;t,x,v),V_\mu(\tau;t,x,v),\tau)=h(x,v).
\end{equation}
We differentiate \eqref{C-2} with respect to $\tau$ to get
\[\partial_\tau g+v^*\cdot \nabla_{x}g+Q\cdot\nabla_{v}g=0,\]
since right-hand side of \eqref{C-2} is independent of $\tau$. Therefore, inserting this choice of $g$ into the identity
in Definition \ref{D3.1} (1), we obtain
\[\<\mu_t,g(\cdot,\cdot,t)\>=\<\mu_s,g(\cdot,\cdot,s)\>,\]
which implies
\begin{align*}
&\int_{\bbr^{2d}}h(x,v)\mu_t(dx,dv)\\
&\qquad=\int_{\bbr^{2d}}h(X_\mu(t;t,x,v),V_\mu(t;t,x,v))\mu_t(dx,dv)
=\int_{\bbr^{2d}}g(x,v,t)\mu_t(dx,dv)\\
&\qquad=\<\mu_t g(\cdot,\cdot,t)\>=\<\mu_s,g(\cdot,\cdot,s)\>=\int_{\bbr^{2d}}g(x,v,s)\mu_s(dx,dv)\\
&\qquad=\int_{\bbr^{2d}}h(X_\mu(t;s,x,v),V_\mu(t;s,x,v))\mu_s(dx,dv).
\end{align*}
\end{proof}

\subsection{Stability analysis}
In this subsection, we provide stability analysis of the measure-valued solution of \eqref{A-2} up to any finite time.
Estimates in this subsection will be used crucially to show the existence and
the uniqueness of the measure-valued solution of \eqref{A-2} in the next subsection.
 We start with the review on the definition of Wasserstein distance.
\begin{definition}\label{Wasser}
Let $\mu,\nu$ be two Radon measure on $\bbr^{d}$.
Then the Wasserstein-$p$ distance $W_p(\mu,\nu)$ is defined by
\[W_p(\mu,\nu):=\inf_{\gamma\in\Gamma(\mu,\nu)}\left(\int_{\bbr^{2d}}|z_1-z_2|^p\gamma(dz_1,dz_2)\right)^{1/p},\]
where $\Gamma(\mu,\nu)$ denotes the set of  all probability measures whose marginals are $\mu$ and $\nu$.
\end{definition}
 In particular, when $p=1$, the Wasserstein-1 distance has following equivalent expression:
\[W_1(\mu,\nu)=\sup_{g\in\Omega} \left|\int_{\bbr^{d}}g(z)\big(\mu(dz)-\nu(dz)\big)\right|,\]
where $\Omega$ is given by
\[\Omega:=\left\{g:\bbr^{d}\to \bbr~:~\|g\|_\infty:=\sup_{z\in\bbr^d}|g(z)| \le1,\quad \|g\|_{\textup{Lip}}:=\sup_{z_1\neq z_2\in\bbr^d}\frac{|g(z_1)-g(z_2)|}{|z_1-z_2|}\le 1\right\}.\]

\begin{remark}\label{R3.2}~
\begin{enumerate}
\item The space of all Radon measure with finite $p$-th moment $P_p(\bbr^d)$ with topology induced by the Wasserstein-$p$ distance is Polish space.
\item For all $g\in C_0(\bbr^{d})$, we have
\[\left|\int_{\bbr^{d}}g(z)\mu(dz)-\int_{\bbr^{d}}g(z)\nu(dz)\right|\le \max\{\|g\|_\infty,\|g\|_{\textup{Lip}}\}W_1(\mu,\nu).\]
\end{enumerate}
\end{remark}

Now, we provide several estimates using the Wasserstein-1 distance.

\begin{lemma}\label{L3.8}
Let $\mu_t,\nu_t\in L^\infty([0,T);\mathcal{M}(\bbr^{2d}))$ be two measure-valued solutions to \eqref{A-2}. Then, we have the following estimates:
\begin{align*}
&(i)~|a(\mu_t)-a(\nu_t)|\le \max\left\{1,C_T\right\}W_1(\mu_t,\nu_t),\\
&(ii)~|b(x,\mu_t)-b(x,\nu_t)|\le \max\left\{2C_T\phi_M,2C_T\|\phi\|_{\textup{Lip}}+\sqrt{2}\phi_M\right\}W_1(\mu_t,\nu_t),\\
&(iii)~|c(x,\mu_t)-c(x,\nu_t)|\le \max\left\{\|\phi\|_{\textup{Lip}},\phi_M\right\}W_1(\mu_t,\nu_t).
\end{align*}
\end{lemma}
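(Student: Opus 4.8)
The plan is to reduce each of the three estimates to a single application of the duality bound in Remark \ref{R3.2}(2), namely $\left|\int_{\bbr^{2d}} g\,(\mu_t-\nu_t)\right| \le \max\{\|g\|_\infty,\|g\|_{\textup{Lip}}\}\,W_1(\mu_t,\nu_t)$, after writing each difference as the integral of a suitable test function against $\mu_t-\nu_t$. Since $a$ and $b$ are vector-valued, I would first fix an arbitrary unit vector $e\in\bbr^d$, bound $e\cdot(a(\mu_t)-a(\nu_t))$ and $e\cdot(b(x,\mu_t)-b(x,\nu_t))$ using scalar test functions, and then recover the stated estimates by taking the supremum over $|e|=1$. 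One technical point to dispatch at the outset: the natural test functions (for instance $x^*\mapsto e\cdot x^*$) do not lie in $C_0(\bbr^{2d})$. This is harmless because, by Lemma \ref{L3.5}, both $\mu_t$ and $\nu_t$ are supported in a common ball of radius $C_T$; one may replace each test function by a compactly supported modification agreeing with it on that ball, so that the sup-norm is effectively computed over the support.

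For the simplest estimate (iii) the test function is $g(x^*,v^*)=\phi(|x-x^*|)$ with $x$ held fixed. Its sup-norm is bounded by $\phi_M$, while its Lipschitz constant satisfies $\|g\|_{\textup{Lip}}\le\|\phi\|_{\textup{Lip}}$, since by the reverse triangle inequality $|\phi(|x-x_1^*|)-\phi(|x-x_2^*|)|\le\|\phi\|_{\textup{Lip}}\big||x-x_1^*|-|x-x_2^*|\big|\le\|\phi\|_{\textup{Lip}}|x_1^*-x_2^*|$. Remark \ref{R3.2}(2) then yields exactly $\max\{\|\phi\|_{\textup{Lip}},\phi_M\}$. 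For (i) the scalar test function $g_e(x^*,v^*)=e\cdot x^*$ is $1$-Lipschitz and, by compact support, satisfies $\|g_e\|_\infty\le R_x(t)\le C_T$; the supremum over $|e|=1$ produces the factor $\max\{1,C_T\}$.

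The main work is (ii), whose test function $g_e(x^*,v^*)=\phi(|x-x^*|)\,e\cdot(x^*+v^*)$ is a product of the $\phi$-factor and a linear factor, so I would estimate its Lipschitz constant via the product rule $\|g_e\|_{\textup{Lip}}\le\|\phi(|x-\cdot|)\|_{\textup{Lip}}\,\|e\cdot(x^*+v^*)\|_\infty+\|\phi(|x-\cdot|)\|_\infty\,\|e\cdot(x^*+v^*)\|_{\textup{Lip}}$. Using $\|\phi(|x-\cdot|)\|_{\textup{Lip}}\le\|\phi\|_{\textup{Lip}}$, $\|\phi(|x-\cdot|)\|_\infty\le\phi_M$, the support bound $|x^*+v^*|\le R_x(t)+R_v(t)\le 2C_T$, and the Cauchy--Schwarz estimate $|x_1^*-x_2^*|+|v_1^*-v_2^*|\le\sqrt{2}\,|(x_1^*,v_1^*)-(x_2^*,v_2^*)|$ (which is the source of the $\sqrt{2}$ in the statement), I get $\|g_e\|_{\textup{Lip}}\le 2C_T\|\phi\|_{\textup{Lip}}+\sqrt{2}\phi_M$ together with $\|g_e\|_\infty\le 2C_T\phi_M$. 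Remark \ref{R3.2}(2) and the supremum over $|e|=1$ then give (ii). The only genuinely delicate point is the bookkeeping of the two contributions to the product-rule Lipschitz bound so that the constants match the stated $\max\{2C_T\phi_M,\,2C_T\|\phi\|_{\textup{Lip}}+\sqrt{2}\phi_M\}$; the sup-norm bounds and the $C_0$ truncation are routine consequences of Lemma \ref{L3.5}.
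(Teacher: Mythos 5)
Your proposal is correct and follows essentially the same route as the paper: bound the sup-norm and Lipschitz constant of the integrand (splitting the product via add-and-subtract, with the support bound from Lemma \ref{L3.5} supplying the factor $2C_T$ and the inequality $|x_1^*-x_2^*|+|v_1^*-v_2^*|\le\sqrt{2}\,|(x_1^*,v_1^*)-(x_2^*,v_2^*)|$ supplying the $\sqrt{2}$), then apply Remark \ref{R3.2}(2). Your explicit handling of the vector-valued test functions via a unit vector $e$ and of the $C_0$ truncation is a welcome refinement of details the paper leaves implicit, but it does not change the argument.
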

\begin{proof}
We only prove $(ii)$. $(i)$ and $(iii)$ can be proved similarly.
Owing to Lemma \ref{L3.5}, it is clear that
\[|\phi(|x-x^*|)(x^*+v^*)|\le 2C_T\phi_M,\]
and
\begin{align*}
&\big|\phi(|x-x_1^*|)(x_1^*+v_1^*)-\phi(|x-x_2^*|)(x_2^*+v_2^*)\big|\\
&\qquad\le \big|(\phi(|x-x_1^*|)-\phi(|x-x_2^*|))(x_1^*+v_1^*)\big|
+\big|\phi(|x-x_2^*|)(x_1^*+v_1^*-x_2^*-v_2^*)\big|\\
&\qquad\le 2C_T\|\phi\|_{\textup{Lip}}|(x_1^*,v_1^*)-(x_2^*,v_2^*)|+\sqrt{2}\phi_M|(x_1^*,v_1^*)-(x_2^*,v_2^*)|\\
&\qquad=(2C_T\|\phi\|_{\textup{Lip}}+\sqrt{2}\phi_M)|(x_1^*,v_1^*)-(x_2^*,v_2^*)|.
\end{align*}

We use the definition of $b(x,\mu_t)$ and Remark \ref{R3.2} (2) to see
\begin{align*}
|b(x,\mu_t)-b(x,\nu_t)|&=\left|\int_{\bbr^{2d}}\phi(|x-x^*|)(x^*+v^*)\big(\mu(dx^*,dv^*)-\nu(dx^*,dv^*)\big)\right|\\
&\le \max\left\{2C_T\phi_M,2C_T\|\phi\|_{\textup{Lip}}+\sqrt{2}\phi_M\right\}W_1(\mu_t,\nu_t).
\end{align*}
\end{proof}

\begin{lemma}\label{L3.9}
Let $\mu_t,\nu_t\in L^\infty([0,T);\mathcal{M}(\bbr^{2d}))$ be two measure-valued solutions to \eqref{A-2}. Then, we have the following estimations:
\begin{align*}
	(i)~|b(X_\mu,\mu_t)-b(X_\nu,\nu_t)| &\le C\|\phi\|_{\textup{Lip}}|X_\mu(t)-X_\nu(t)|\\
	&+\max\{2C_T\phi_M,2C_T\|\phi\|_{\textup{Lip}}+\sqrt{2}\phi_M\}W_1(\mu_t,\nu_t).\\
	(ii)~|c(X_\mu,\mu_t)-c(X_\nu,\nu_t)|&\le\|\phi\|_{\textup{Lip}}|X_\mu(t)-X_\nu(t)|+\max\{\|\phi\|_{\textup{Lip}},\phi_M\}W_1(\mu_t,\nu_t).
\end{align*}
\end{lemma}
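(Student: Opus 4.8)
The plan is to prove both estimates by the same two-step decomposition: insert an intermediate quantity that keeps the \emph{spatial} argument of the second term but the \emph{measure} of the first, so that one difference isolates the dependence on the evaluation point (at fixed measure) while the other isolates the dependence on the measure (at fixed point). Writing $X_\mu$ for $X_\mu(t)$ and $X_\nu$ for $X_\nu(t)$, I would start from
\[
b(X_\mu,\mu_t)-b(X_\nu,\nu_t) = \big(b(X_\mu,\mu_t)-b(X_\nu,\mu_t)\big) + \big(b(X_\nu,\mu_t)-b(X_\nu,\nu_t)\big),
\]
and treat $c$ by the identical split.

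For the second bracket the spatial argument $X_\nu$ is common to both terms, so this difference is exactly of the form already controlled in Lemma \ref{L3.8}: applying part $(ii)$ (resp.\ $(iii)$) with the fixed point $x=X_\nu$ produces the $W_1(\mu_t,\nu_t)$ contribution with the advertised constant $\max\{2C_T\phi_M,\,2C_T\|\phi\|_{\textup{Lip}}+\sqrt{2}\phi_M\}$ (resp.\ $\max\{\|\phi\|_{\textup{Lip}},\phi_M\}$). No new computation is needed here; it is simply handed off to the previous lemma.

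For the first bracket the measure $\mu_t$ is common, so subtracting under the integral leaves only the difference in the kernel $\phi(|\,\cdot\,-x^*|)$. I would pull the difference inside, bound
\[
\big|\phi(|X_\mu-x^*|)-\phi(|X_\nu-x^*|)\big|\le \|\phi\|_{\textup{Lip}}\,\big||X_\mu-x^*|-|X_\nu-x^*|\big|\le \|\phi\|_{\textup{Lip}}\,|X_\mu-X_\nu|
\]
using the reverse triangle inequality, and then estimate what remains. For $b$ the remaining factor is $|x^*+v^*|$, which is $\le 2C_T$ uniformly on $\mathrm{supp}(\mu_t)$ by the support bound of Lemma \ref{L3.5}; combined with the total mass $\int\mu_t=1$ this yields $2C_T\|\phi\|_{\textup{Lip}}|X_\mu-X_\nu|$, i.e.\ the term $C\|\phi\|_{\textup{Lip}}|X_\mu-X_\nu|$ in $(i)$ with $C=2C_T$. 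For $c$ there is no such weight, so the same computation gives directly $\|\phi\|_{\textup{Lip}}|X_\mu-X_\nu|$, matching the coefficient $1$ in $(ii)$.

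I do not expect any real obstacle: once the decomposition is in place the estimate is routine. The only points requiring (minor) care are the use of the reverse triangle inequality to convert the Lipschitz bound on $\phi$ into a bound controlled by $|X_\mu-X_\nu|$, and the invocation of the uniform support bound (Lemma \ref{L3.5}) to dominate the $|x^*+v^*|$ weight in the $b$-estimate; everything involving the measure-dependence is absorbed into Lemma \ref{L3.8}.
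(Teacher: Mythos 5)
Your proposal is correct and follows essentially the same route as the paper: the paper uses the identical intermediate splitting $b(X_\mu,\mu_t)-b(X_\nu,\mu_t)$ plus $b(X_\nu,\mu_t)-b(X_\nu,\nu_t)$, handles the second piece by Lemma \ref{L3.8} and the first by the Lipschitz bound on $\phi$ together with the reverse triangle inequality. The only cosmetic difference is that the paper controls the weight $|x^*+v^*|$ via the second-moment bound of Lemma \ref{L3.3} and Cauchy--Schwartz rather than the support bound of Lemma \ref{L3.5}, which changes nothing since the constant is generic.
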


\begin{proof}
We focus on (i) since (ii) can be proved similarly.
 To estimate $|b(X_\mu,\mu_t)-b(X_\nu,\nu_t)|$, we separate it by two terms as follows:
\begin{align*}
&|b(X_\mu,\mu_t)-b(X_\nu,\nu_t)|\\
&\quad=\left|\int_{\bbr^{2d}}\phi(|X_\mu-x^*|)(x^*+v^*)\mu_t(dx^*,dv^*)-\int_{\bbr^{2d}}\phi(|X_\nu-x^*|)(x^*+v^*)\nu_t(dx^*,dv^*)\right|\\
&\quad\le \int_{\bbr^{2d}}\big|\phi(|X_\mu-x^*|)-\phi(|X_\nu-x^*|)\big| |x^*+v^*|\mu_t(dx^*,dv^*)\\
&\quad+\left|\int_{\bbr^{2d}}\phi(|X_\nu-x^*|)(x^*+v^*)
\big(\mu_t(dx^*,dv^*)
-\nu_t(dx^*,dv^*)\big)\right|\\
&\quad=:\mathcal{I}_{21}+\mathcal{I}_{22}.
\end{align*}
$\bullet$ (Estimation of $\mathcal{I}_{21}$) We use the Lipschitz continuity of $\phi$ and Lemma \ref{L3.3}, together with Cauchy-Schwartz inequality to get
\[\mathcal{I}_{21}\le C_T\|\phi\|_{\textup{Lip}}|X_\mu-X_\nu|.\]
$\bullet$ (Estimation of $\mathcal{I}_{22}$) Estimation of $\mathcal{I}_{22}$ directly comes from Lemma \ref{L3.8} (ii):
\[\mathcal{I}_{22} \le \max\left\{2C_T\phi_M,2C_T\|\phi\|_{\textup{Lip}}+\sqrt{2}\phi_M\right\}W_1(\mu_t,\nu_t).\]
Now, we combine estimation of $\mathcal{I}_{21}$ and $\mathcal{I}_{22}$ to get desired estimate.
\end{proof}

\begin{lemma}\label{lem3.10}
Let $\mu_t,\nu_t\in L^\infty([0,T);\mathcal{M}(\bbr^{2d}))$ be two measure-valued solutions of \eqref{A-2} with compactly supported initial condition $\mu_0$ and $\nu_0$. Moreover, suppose that $\mu_t$ and $\nu_t$ have uniformly bounded moments:
\begin{align*}
\int_{\bbr^{2d}}\mu_t(dx,dv)= 1,\quad \int_{\bbr^{2d}}|v|^2\mu_t(dx,dv)\le m_2,\\
 \int_{\bbr^{2d}}\nu_t(dx,dv)= 1,\quad \int_{\bbr^{2d}}|v|^2\nu_t(dx,dv)\le m_2.\end{align*}
Then, for $0\le s,t < T$, we have
\[
|X_\mu(s;t,x,v)-X_\nu(s;t,x,v)|+|V_\mu(s;t,x,v)-V_\nu(s;t,x,v)|
\le C_T\int_{\min\{s,t\}}^{\max\{s,t\}}W_1(\mu_\tau,\nu_\tau)\,d\tau.
\]
\end{lemma}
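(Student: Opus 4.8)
The plan is to run the standard Grönwall argument for characteristic stability: integrate the bi-characteristic system \eqref{C-1} backward from the common base time $t$ and control the difference of the two force fields by the Wasserstein distance of the underlying measures. Without loss of generality I take $s\le t$ (the opposite case is identical after exchanging the endpoints, which is precisely why the right-hand side is written with $\int_{\min\{s,t\}}^{\max\{s,t\}}$). Writing $(X_\mu,V_\mu)=(X_\mu(\tau;t,x,v),V_\mu(\tau;t,x,v))$ and similarly for $\nu$, I introduce the deviation functional
\[
D(\tau):=|X_\mu-X_\nu|+|V_\mu-V_\nu|,
\]
and note that $D(t)=0$, since both characteristics pass through $(x,v)$ at time $t$.

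Integrating \eqref{C-1} from $t$ down to $\tau\in[s,t]$ gives the identities $(X_\mu-X_\nu)(\tau)=-\int_\tau^t(V_\mu-V_\nu)\,d\sigma$ and $(V_\mu-V_\nu)(\tau)=-\int_\tau^t[Q(X_\mu,V_\mu,\mu_\sigma)-Q(X_\nu,V_\nu,\nu_\sigma)]\,d\sigma$. The first immediately yields $|X_\mu-X_\nu|(\tau)\le\int_\tau^t D(\sigma)\,d\sigma$, so the crux is to bound the force difference in the second identity by $C_T D(\sigma)+C_T W_1(\mu_\sigma,\nu_\sigma)$. For this I would use the decomposition $Q=a+b-(1+c)x-cv$ from \eqref{C-0} and treat the four pieces separately. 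The term $a(\mu_\sigma)-a(\nu_\sigma)$ is handled directly by Lemma \ref{L3.8}(i) and $b(X_\mu,\mu_\sigma)-b(X_\nu,\nu_\sigma)$ by Lemma \ref{L3.9}(i), each producing a multiple of $|X_\mu-X_\nu|$ plus a multiple of $W_1(\mu_\sigma,\nu_\sigma)$. For the last two pieces I would add and subtract to separate the variation of the coefficient from that of the argument, e.g.
\[
(1+c(X_\mu,\mu_\sigma))X_\mu-(1+c(X_\nu,\nu_\sigma))X_\nu=(1+c(X_\mu,\mu_\sigma))(X_\mu-X_\nu)+(c(X_\mu,\mu_\sigma)-c(X_\nu,\nu_\sigma))X_\nu,
\]
and analogously for the $cv$ term. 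Here the first summand is controlled by the uniform bound $|c|\le\phi_M$ of Lemma \ref{L3.4}, while the second uses the a priori bound $|X_\nu|,|V_\nu|\le C_T$ along the characteristic (from the Grönwall estimate of Lemma \ref{L3.5}) together with Lemma \ref{L3.9}(ii).

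Collecting the four estimates gives the desired pointwise bound on the force difference, and hence
\[
D(\tau)\le C_T\int_\tau^t D(\sigma)\,d\sigma+C_T\int_\tau^t W_1(\mu_\sigma,\nu_\sigma)\,d\sigma\le C_T\int_\tau^t D(\sigma)\,d\sigma+C_T\int_s^t W_1(\mu_\sigma,\nu_\sigma)\,d\sigma.
\]
Treating the last term (constant in $\tau$) as an inhomogeneity and applying the integral form of Grönwall's inequality backward on $[s,t]$ yields $D(s)\le C_T e^{C_T(t-s)}\int_s^t W_1(\mu_\sigma,\nu_\sigma)\,d\sigma$, which is the assertion after absorbing $e^{C_TT}$ into $C_T$.

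I expect the only delicate point to be the bookkeeping for the $(1+c)x$ and $cv$ terms, where both the coefficient $c$ and its evaluation point differ between the two flows: one must split off the coefficient variation cleanly, and converting the position-Lipschitz estimate of Lemma \ref{L3.9} into a usable bound forces the use of the a priori boundedness of the characteristics from Lemma \ref{L3.5}. The remaining care is purely organizational, namely keeping the backward direction of integration consistent and choosing $C_T$ uniformly in $\tau$ via $\int_\tau^t W_1\le\int_s^t W_1$.
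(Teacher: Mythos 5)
Your proposal is correct and follows essentially the same route as the paper: the same decomposition $Q=a+b-(1+c)x-cv$, the same add-and-subtract splitting of the $(1+c)x$ and $cv$ terms into a coefficient variation (controlled via Lemmas \ref{L3.8}--\ref{L3.9} and the a priori characteristic bounds of Lemma \ref{L3.5}) plus an argument variation (controlled by $|c|\le\phi_M$), followed by a Gr\"onwall argument on $|\mathbb X|+|\mathbb V|$. The only cosmetic differences are the direction of time (the paper integrates forward from $t$ to $s$ with $t<s$, and also inserts the integrating factor $e^{c(X_\mu,\mu_\tau)\tau}$ before integrating, which you replace by crudely absorbing the $c\,\mathbb V$ term into the Gr\"onwall constant); both are valid.
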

\begin{proof}
We define the differences of two characteristic curve as
\[\mathbb X(s):=X_\mu(s;t,x,v)-X_\nu(s;t,x,v),\quad \mathbb V(s):=V_\mu(s;t,x,v)-V_\nu(s;t,x,v).\]
Without loss of generality, we assume $0<t<s$. From \eqref{C-1}, it is clear that
\[\frac{d\mathbb X(\tau)}{d\tau}=\mathbb V(\tau),\quad \mathbb X(t)=0,\]
and hence,
\begin{equation}\label{C-3}
|\mathbb X(s)|\le \int_t^s|\mathbb V(\tau)|d\tau.
\end{equation}

On the other hand, again from \eqref{C-1} we can estimate $V(s)$ as
\begin{align}
\begin{aligned}\label{C-4}
\frac{d\mathbb V(\tau)}{d\tau}
&=a(\mu_\tau)-a(\nu_\tau)+b(X_\mu,\mu_\tau)-b(X_\nu,\nu_\tau)-\mathbb X(\tau)
-c(X_\mu,\mu_\tau)\mathbb X(\tau)\\
&-(c(X_\mu,\mu_\tau){-}c(X_\nu,\nu_\tau))X_\nu-c(X_\mu,\mu_\tau)\mathbb V(\tau)
-(c(X_\mu,\mu_\tau){-}c(X_\nu,\nu_\tau))V_\nu
\end{aligned}
\end{align}
with $\mathbb V(t)=0$.
Then, we multiply \eqref{C-4} by $e^{c(X_\mu,\mu_\tau)\tau}$, integrate over $[t,s]$, and use Lemma \ref{L3.8} to obtain
\begin{align}
\begin{aligned}\label{C-5}
|\mathbb V(s)|&\le \int_t^s |a(\mu_\tau)-a(\nu_\tau)|+|b(X_\mu,\mu_\tau)-b(X_\nu,\nu_\tau)|\,d\tau
+\int_t^s (1+c(X_\mu,\mu_\tau))|\mathbb X(\tau)|\,d\tau\\
&+\int_t^s|c(X_\mu,\mu_\tau)-c(X_\nu,\nu_\tau)|(|X_\nu|+|V_\nu|)\,d\tau\\
&\le \int_t^s \Big[\max\{1,C_T\}W_1(\mu_\tau,\nu_\tau)+C_T\|\phi\|_{\textup{Lip}}|\mathbb X(\tau)|\\
&+\max\left\{2C_T\phi_M,2C_T\|\phi\|_{\textup{Lip}}+\sqrt{2}\phi_M\right\}W_1(\mu_\tau,\nu_\tau)\Big]d\tau\\
&+\int_t^s(1+\phi_M)|\mathbb X(\tau)|\,d\tau+\int_t^s2C_T(\|\phi\|_{\textup{Lip}}|\mathbb X(\tau)|+\max\left\{\|\phi\|_{\textup{Lip}},\phi_M\right\}W_1(\mu_\tau,\nu_\tau))\,d\tau\\
&=:\int_t^s \Big[A|\mathbb X(\tau)|+BW_1(\mu_\tau,\nu_\tau)\Big]\,d\tau,
\end{aligned}
\end{align}
where $A>1$ and $B$ do not depend on $\tau$. Now, we combine \eqref{C-3} and \eqref{C-5} to get
\[\mathbb Z(s)\le \int_t^s\Big[A\mathbb Z(\tau)+BW_1(\mu_\tau,\nu_\tau)\Big]\,d\tau,\]
where $\mathbb Z(t):=|\mathbb X(t)|+|\mathbb V(t)|.$
 Finally, we use Gr\"onwall inequality to conclude
\[\mathbb Z(s)\le \int_t^s e^{A(s-\tau)}BW_1(\mu_\tau,\nu_\tau)\,d\tau=C_T\int_t^sW_1(\mu_\tau,\nu_\tau)\,d\tau.\]
\end{proof}

\begin{proposition}\label{P3.13}
Let $\mu_t,\nu_t\in L^\infty([0,T);\mathcal{M}(\bbr^{2d}))$ be two measure-valued solutions to \eqref{A-2} with compactly supported initial condition $\mu_0$ and $\nu_0$.
Moreover, suppose that $\mu_t$ and $\nu_t$ have uniformly bounded moments:
for $0\le t<T$,
\begin{align*}
\int_{\bbr^{2d}}\mu_t(dx,dv)\le 1,\quad \int_{\bbr^{2d}}|v|^2\mu_t(dx,dv)\le m_2,\\
\int_{\bbr^{2d}}\nu_t(dx,dv)\le 1,\quad \int_{\bbr^{2d}}|v|^2\nu_t(dx,dv)\le m_2.\end{align*}
Then, for $0\le t<T$,
\[W_1(\mu_t,\nu_t)\le C_TW_1(\mu_0,\nu_0).\]
\end{proposition}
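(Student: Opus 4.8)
The plan is to represent both solutions as pushforwards of their initial data along the bicharacteristic flow (Lemma \ref{L3.6} with $s=0$), to measure their discrepancy through the Kantorovich--Rubinstein dual formulation of $W_1$, and then to close a Gr\"onwall loop using the flow-stability estimate of Lemma \ref{lem3.10}. Fix a test function $g\in\Omega$, so that $\|g\|_\infty\le1$ and $\|g\|_{\textup{Lip}}\le1$, and abbreviate $Z_\mu(t;0,x,v):=(X_\mu(t;0,x,v),V_\mu(t;0,x,v))$, and likewise $Z_\nu$. Lemma \ref{L3.6} gives $\int g\,\mu_t(dx,dv)=\int g(Z_\mu(t;0,x,v))\,\mu_0(dx,dv)$ and the analogous identity for $\nu$. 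I would then split the difference into a ``flow'' part and an ``initial-data'' part:
\begin{align*}
\int g\,d\mu_t-\int g\,d\nu_t
&=\int_{\bbr^{2d}}\big[g(Z_\mu(t;0,x,v))-g(Z_\nu(t;0,x,v))\big]\,\mu_0(dx,dv)\\
&\quad+\int_{\bbr^{2d}}g(Z_\nu(t;0,x,v))\,\big(\mu_0-\nu_0\big)(dx,dv)
=:\mathcal{J}_1+\mathcal{J}_2.
\end{align*}

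For $\mathcal{J}_1$, since $\|g\|_{\textup{Lip}}\le1$ and $\mu_0$ has mass at most one, I would bound the integrand pointwise by $|Z_\mu(t;0,x,v)-Z_\nu(t;0,x,v)|=|\mathbb X(t)|+|\mathbb V(t)|$ and apply Lemma \ref{lem3.10}, whose right-hand side is \emph{independent} of $(x,v)$; this yields directly $|\mathcal{J}_1|\le C_T\int_0^t W_1(\mu_\tau,\nu_\tau)\,d\tau$. For $\mathcal{J}_2$, the function $(x,v)\mapsto g(Z_\nu(t;0,x,v))$ is an admissible test function (up to a constant): its sup-norm is at most $\|g\|_\infty\le1$, and by Remark \ref{R3.2}(2) one gets $|\mathcal{J}_2|\le\max\{1,L_T\}\,W_1(\mu_0,\nu_0)$, where $L_T$ denotes the Lipschitz constant of the flow map $(x,v)\mapsto Z_\nu(t;0,x,v)$ with respect to the initial point.

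The main obstacle is precisely this bound on $L_T$, which is not among the stated lemmas. I would establish it by a separate Gr\"onwall argument applied to two characteristics of the \emph{same} measure $\nu$ issuing from two different initial points: the driving field $(v,Q(x,v,\nu_s))$ is Lipschitz in $(x,v)$, since the $v$-dependence enters linearly through the term $-c(x,\nu_s)v$ with $|c|\le\phi_M$ (Lemma \ref{L3.4}), while the $x$-dependence is Lipschitz by the Lipschitz continuity of $\phi$ together with the uniform support bound of Lemma \ref{L3.5}. Consequently the separation of the two curves grows at most exponentially, giving $L_T\le e^{C_T t}\le C_T$ on $[0,T)$. Combining the two estimates and taking the supremum over $g\in\Omega$ yields
\[
W_1(\mu_t,\nu_t)\le C_T\,W_1(\mu_0,\nu_0)+C_T\int_0^t W_1(\mu_\tau,\nu_\tau)\,d\tau,
\]
and a final application of the Gr\"onwall inequality absorbs the integral term and produces the claimed bound $W_1(\mu_t,\nu_t)\le C_T W_1(\mu_0,\nu_0)$.
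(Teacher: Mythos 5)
Your proposal is correct and follows essentially the same route as the paper: decompose $\int g\,d\mu_t-\int g\,d\nu_t$ via the pushforward representation of Lemma \ref{L3.6} into a flow-discrepancy term controlled by Lemma \ref{lem3.10} and an initial-data term controlled by the dual form of $W_1$, then close with Gr\"onwall. The one place you go beyond the paper is the treatment of $\mathcal{J}_2$: the paper silently bounds it by $W_1(\mu_0,\nu_0)$ with constant $1$, whereas you correctly observe that the composed test function $(x,v)\mapsto g(Z_\nu(t;0,x,v))$ is only Lipschitz with constant $L_T$ (the Lipschitz constant of the flow map in its initial point) and supply the missing Gr\"onwall estimate $L_T\le e^{C_Tt}$ --- a small but genuine gap in the paper's own argument that your version patches, at the harmless cost of a $\max\{1,L_T\}$ factor absorbed into $C_T$.
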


\begin{proof}
Let $g\in C_0(\bbr^{2d})$ be a test function with $\|g\|_\infty \le 1$ and $\|g\|_{\textup{Lip}}\le 1$.
Then, we obtain, owing to Lemma\,\ref{L3.6}, Remark\,\ref{R3.2} and Lemma\,\ref{lem3.10}, that
\begin{align*}
&\Bigg|\int_{\bbr^{2d}}g(x,v)\big(\mu_t(dx,dv)-\nu_t(dx,dv)\big)\Bigg|\\
&\qquad\le\int_{\bbr^{2d}}\big|g\big(X_\mu(t;0,x,v),V_\mu(t;0,x,v)\big)
{-}g\big(X_\nu(t;0,x,v),V_\nu(t;0,x,v)\big)\big|\mu_0(dx,dv)\\
&\qquad+\left|\int_{\bbr^{2d}}g\big(X_\nu(t;0,x,v),V_\nu(t;0,x,v)\big)
\big(\mu_0(dx,dv){-}\nu_0(dx,dv)\big)\right|\\
&\qquad\le \int_{\bbr^{2d}}\big(|X_\mu(t;0,x,v){-}X_\nu(t;0,x,v)|+|V_\mu(t;0,x,v){-}V_\mu(t;0,x,v)|\big)\mu_0(dx,dv)\\
&\qquad
+W_1(\mu_0,\nu_0)\\
&\qquad\le C_T\int_{\bbr^{2d}}\int_0^tW_1(\mu_\tau,\nu_\tau)d\tau\mu_0(dx,dv)+W_1(\mu_0,\nu_0)\\
&\qquad=C_T\int_0^tW_1(\mu_\tau,\nu_\tau)d\tau+W_1(\mu_0,\nu_0).
\end{align*}
We take a supremum over the function space $\Omega$ to obtain
\[W_1(\mu_t,\nu_t)\le C_T\int_0^tW_1(\mu_\tau,\nu_\tau)\,d\tau+W_1(\mu_0,\nu_0),\]
and after using Gr\"onwall inequality, we get the following uniform stability up to any finite time $T$:
\[W_1(\mu_t,\nu_t)\le C_TW_1(\mu_0,\nu_0).\]
\end{proof}

\subsection{Existence and uniqueness}
In this subsection, we provide the existence and the uniqueness of measure-valued solutions to \eqref{A-2}.
Since the uniqueness follows directly from the stability estimate given in Proposition \ref{P3.13}, we focus on  the existence part.

\noindent{\bf{Proof of Theorem \ref{T.1.1}}}
The idea of proof is taking mean-field limit of particle solutions which can be interpreted as empirical measures. For readability, we separate the proof into three steps.\\

\noindent~$\bullet$ Step A (Approximating initial data $\mu_0$): For a compactly supported Radon measure $\mu_0$, it is well known, for example in \cite{V}, that there exists a sequence of empirical measure $\mu_0^n$ such that
\[\mu_0^n:=\frac{1}{n}\sum_{i=1}^n \delta_{(x_i,v_i)},\quad \lim_{n\to\infty} W_1(\mu_0^n,\mu_0)=0.\]
Therefore, for arbitrary positive constant $\varepsilon>0$, we can find $N=N(\varepsilon)$ such that
\[W_1(\mu_0^n,\mu_0^m)\le \varepsilon ,\quad n,m>N(\varepsilon).\]

\noindent~$\bullet$ Step B (Estimate of $W_1(\mu_t^n,\mu_t^m)$) We denote
\[\mu_0^n:=\frac{1}{n}\sum_{i=1}^n \delta_{(x^0_i,v^0_i)},\quad \mu_0^m:=\frac{1}{m}\sum_{j=1}^m\delta_{(\bar{x}^0_j,\bar{v}^0_j)},\]
and let $(x_i,v_i)$ and $(\bar{x}_i,\bar{v}_i)$ be solutions of the particle system \eqref{A-1} subjected to initial data $(x_i^0,v_i^0)$ and $(\bar{x}_i^0,\bar{v}_i^0)$ respectively. Then,
\[\mu_t^n:=\frac{1}{n}\sum_{i=1}^n \delta_{(x_i,v_i)},\quad \mu_t^m:=\frac{1}{m}\sum_{j=1}^m\delta_{(\bar{x}_j,\bar{v}_j)},\]
are two measure-valued solutions of \eqref{A-2}. Then, we have from the Proposition \ref{P3.13} that
\[W_1(\mu_t^n,\nu_t^m)\le C_TW_1(\mu_0^n,\nu_0^m)\le C_T\varepsilon,\]
which implies $\{\mu_t^n\}_{n\ge1}$ is a Cauchy sequence and converges to $\mu_t$ as $n\to \infty$.\\

\noindent~$\bullet$ Step C (Passing limit)
In view of Definition \ref{D3.1}, we first check the weak continuity of $\mu_t$. For any $g \in C_0^1(\bbr^{2d})$, we have from Lemma \ref{L3.6} that
	\begin{align*}
	\left|\<\mu^n_{t+\Delta t},g\>-\<\mu^n_t,g\>\right|&=\left| \int_{\bbr^{2d}} g(X_\mu^n(t+\Delta t;t,x,v),V_\mu^n(t+\Delta t;t,x,v))-g(x,v)\mu_t(dx,dv)\right| \cr
	&\leq \|g\|_{C^1(\bbr^{2d})}
	\big( |X_\mu^n(t+\Delta t;t,x,v)-X_\mu^n(t)| + |V_\mu^n(t+\Delta t;t,x,v)-V_\mu^n(t)| \big).
	\end{align*}
	This, together with Lemma \ref{L3.5}, implies that there exists a constant $C_T$ such that
	\[
	|X_\mu^n(t+\Delta t;t,x,v)-X_\mu^n(t)| + |V_\mu^n(t+\Delta t;t,x,v)-V_\mu^n(t)| \leq C_T \Delta t.
	\]
	Therefore, the weak continuity holds uniformly for any $n$, which gives that its limit $\mu_t$ is also weakly continuous. Secondly, we need to show that
	\begin{equation}\label{C-6}
	\<\mu_t,g(\cdot,\cdot,t)\>-\<\mu_0,g(\cdot,\cdot,0)\>=\int_0^t\<\mu_s,\partial_sg+v\cdot \nabla_xg+Q\cdot \nabla_v g\>ds,\quad g\in C_0^1(\bbr^{2d}\times[0,T)).
	\end{equation}
Since we already know that the approximated empirical measure $\mu_t^n$ satisfies
\begin{equation}\label{C-7}
\<\mu^n_t,g(\cdot,\cdot,t)\>-\<\mu^n_0,g(\cdot,\cdot,0)\>=\int_0^t\<\mu^n_s,\partial_sg+v\cdot \nabla_xg+Q\cdot \nabla_v g\>ds,
\end{equation}
and hence, we only need to show that each term in \eqref{C-7} converges to \eqref{C-6}. On the other hand, since $\mu_t^n$ converges to $\mu_t$ in the Wasserstein-1 metric, which implies the weak*-convergence of measure, the first two terms trivially converge to their counterparts in \eqref{C-6}
\[\<\mu^n_t,g(\cdot,\cdot,t)\>\to \<\mu_t,g(\cdot,\cdot,t)\>,\quad \<\mu^n_0,g(\cdot,\cdot,0)\>\to\<\mu_0,g(\cdot,\cdot,0)\>.\]
Therefore, it remains to show that the right-hand side of \eqref{C-7} converges to that of \eqref{C-6}:
\[\<\mu^n_s,\partial_sg+v\cdot \nabla_xg +Q(\mu_s^n)\cdot \nabla_vg\>\to \<\mu_s,\partial_sg+v\cdot \nabla_xg +Q(\mu_s)\cdot \nabla_vg\>,\quad \mbox{as} \quad n\to \infty.\]
Again, by the weak*-convergence of measure, it suffices to show
\[\<\mu^n_s,Q(\mu_s^n)\cdot \nabla_vg\>\to \<\mu_s,Q(\mu_s)\cdot \nabla_vg\>.\]
Note that
\begin{align*}
\Big|\<\mu^n_s,&Q(\mu_s^n)\cdot \nabla_vg\>-\<\mu_s,Q(\mu_s)\cdot \nabla_vg\>\Big|=\left|\int_{\bbr^{2d}}Q(\mu_s^n)\cdot \nabla_v g \,d\mu^n_s-\int_{\bbr^{2d}}Q(\mu_s)\cdot \nabla_v g \,d\mu_s\right|\\
&\le \int_{\bbr^{2d}}|Q(\mu_s^n)-Q(\mu_s)| |\nabla_v g| \,d\mu_s^n+\left|\int_{\bbr^{2d}}Q(\mu_s)\cdot \nabla_v g \,d\mu^n_s-\int_{\bbr^{2d}}Q(\mu_s)\cdot \nabla_v g \,d\mu_s\right|\\
&=:\mathcal{I}_{31}+\mathcal{I}_{32}.
\end{align*}

\noindent$\diamond$ (Estimate of $\mathcal{I}_{31}$) : We note that
\begin{equation}\label{C-8}
Q(x,v,\mu_s)=a(\mu_s)+b(x,\mu_s)-(1+c(x,\mu_s))x-c(x,\mu_s)v,
\end{equation}
and we use Lemma \ref{L3.5} and Lemma \ref{L3.8} to estimate
\begin{align*}
|Q(x,v,\mu_s^n)-Q(x,v,\mu_s)|&\le |a(\mu_s^n)-a(\mu_s)|+|b(x,\mu_s^n)-b(x,\mu_s)|\\
&+|c(x,\mu^n_s)-c(x,\mu_s)||x|+|c(x,\mu_s^n)-c(x,\mu_s)||v|\\
&\le C_TW_1(\mu_t^n,\mu_t).
\end{align*}
Therefore, we have
\[\mathcal{I}_{31}\le C_TW_1(\mu_t^n,\mu_t)\|\nabla_vg\|_\infty\to 0,\quad \mbox{as $n\to \infty$}.\]

\noindent$\diamond$ (Estimate of $\mathcal{I}_{32}$) : Thanks to the Remark \ref{R3.2} (2), it suffices to show that
\[\|Q(x,v,\mu_s)\nabla_vg\|_\infty<C_T,\quad \|Q(x,v,\mu_s)\nabla_vg\|_{\textup{Lip}}<C_T.\]
However, the first estimate follows directly from the equation \eqref{C-8} and Lemma \ref{L3.4}:
\[\|Q(x,v,\mu_s)\cdot \nabla_vg\|_\infty\le \|Q(x,v,\mu_s)\|_\infty \|\nabla_vg\|_\infty <C_T.\]
For the second estimate, we observe for any two phase points $(x,v)$ and $ (x^*,v^*)$ lying in the support of $\mu_s$ that
\begin{align*}
|Q(x,v,\mu_s)&\nabla_vg(x,v)-Q(x^*,v^*,\mu_s)\nabla_vg(x^*,v^*)|\\
&\le |Q(x,v,\mu_s)-Q(x^*,v^*,\mu_s)||\nabla_vg(x,v)|+|Q(x^*,v^*,\mu_s)||\nabla_vg(x,v)-\nabla_vg(x^*,v^*)|.
\end{align*}
However, we can show by a similar calculation as in the proof of Lemma \ref{L3.9} that
\begin{align*}
|Q(&x,v,\mu_s)-Q(x^*,v^*,\mu_s)|\\
&\le |b(x,\mu_s)-b(x^*,\mu_s)|+|(1+c(x,\mu_s))x-(1+c(x^*,\mu_s))x^*|+|c(x,\mu_s)v-c(x^*,\mu_s)v^*|\\
&\le|b(x,\mu_s)-b(x^*,\mu_s)|+|x-x^*|+|c(x,\mu_s)-c(x^*,\mu_s)| (|x| + |v|)+ c(x^*,\mu_s)(|x-x^*|+|v-v^*|)\\
&\le C_T(|x-x^*|+|v-v^*|).
\end{align*}
Therefore,
\begin{align*}
|Q(x,v,\mu_s)\nabla_vg(x,v)&-Q(x^*,v^*,\mu_s)\nabla_vg(x^*,v^*)|\\
&\le \Big(C_T\|\nabla_vg\|_\infty +\|Q\|_\infty \|\nabla^2g\|_\infty \Big)(|x-x^*|+|v-v^*|),
\end{align*}
which implies $\|Q(x,v,\mu_s)\nabla_vg\|_{\textup{Lip}}<C_T$. By combining arguments from Step A to Step C, we conclude the proof of Theorem \ref{T.1.1}.

\section{Asymptotic herding behavior of kinetic herding model}
In this section, we study the asymptotic behavior of the measure-valued solutions for (\ref{A-2}). The herding behavior of kinetic model is described in two ways. 
We first define various functionals on the measure-valued solutions.
\subsection{Definitions and basic estimates} We begin by introducing the following energy-like functionals
\begin{equation*}
X(t):=\int_{\bbr^{2d}}|x-x_c|^2\,\mu_t(dx,dv),\quad V(t):=\int_{\bbr^{2d}}|v-v_c|^2\,\mu_t(dx,dv),
\end{equation*}
where the mean variables $x_c$ and $v_c$ are given by
\[x_c(t):=\int_{\bbr^{2d}}x\,\mu(dx,dv),\quad v_c(t)=\int_{\bbr^{2d}}v\,\mu(dx,dv).\]
Note from Remark \ref{R3.1} that we have $v_c(t)=v_c(0)=m_1$ and hence, $x_c(t)=x_c(0)+v_c(0)t$. Therefore, due to the Galilean invariance principle of the model, we may assume $x_c=v_c=0$ without loss of generality. We also need following functionals:
\begin{definition}  \label{aux functionals} We introduce auxiliary functionals defined as follows:

\begin{enumerate}
\item $L^2$-covariance functional:
\[C(X,V)(t):=\int_{\bbr^{2d}}(x-x_c)\cdot (v-v_c)\,\mu_t(dx,dv).\]
\item Auxiliary functional:
\[M(t):=\int_{\bbr^{4d}}\Phi(|x-x^*|)\,\mu_t(dx,dv)\,\mu_t(dx^*,dv^*),\]
where $\Phi$ is defined as
\[\Phi(r):=\frac{1}{2}\int_0^{r^2} \widetilde\phi(s)\,ds,\quad \widetilde\phi(r^2)=\phi(r).\]

\item Weighted energy functional:
\[X_\phi(t):=\int_{\bbr^{4d}}\phi(|x-x^*|)|x-x^*|^2\,\mu_t(dx,dv)\,\mu_t(dx^*,dv^*),\]
\[V_\phi(t):=\int_{\bbr^{4d}}\phi(|x-x^*|)|v-v^*|^2\,\mu_t(dx,dv)\,\mu_t(dx^*,dv^*).\]
\item Weighted covariance functional:
\[C_\phi(X,V)(t):=\int_{\bbr^{4d}}\phi(|x-x^*|)(x-x^*)\cdot (v-v^*)\,\mu_t(dx,dv)\,\mu_t(dx^*,dv^*).\]
\end{enumerate}
\end{definition}

We remark that energy-like functionals $X(t),V(t)$ and $C(X,V)(t)$ can be understood as variances and co-variance of $x$ and $v$ at time $t$, respectively.

In the following lemma, we record how the time derivatives of the above functions are expressed.
\begin{lemma}\label{L3.12}
Let $\mu_t$ be the measure-valued solution of kinetic herding equation \eqref{A-2}. Then,
\begin{align*}
(i)&~\frac{dX}{dt}=2C(X,V),\quad\quad (ii)~\frac{dV}{dt}=-2\lambda_wC(X,V)-\lambda_xC_\phi(X,V)-\lambda_vV_\phi,\\
(iii)&~\frac{dM}{dt}=C_\phi(X,V),\hspace{0.6cm} (iv)~\frac{dC(X,V)}{dt}=V -\lambda_w X- \frac{\lambda_x}{2}X_{\phi}- \frac{\lambda_v}{2} C_{\phi}(X,V).
\end{align*}
\end{lemma}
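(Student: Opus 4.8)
The plan is to read off every derivative directly from the weak formulation in Definition~\ref{D3.1}(2), which, after differentiating in $t$, states that $\frac{d}{dt}\langle\mu_t,g\rangle=\langle\mu_t,\partial_t g+v\cdot\nabla_x g+Q\cdot\nabla_v g\rangle$ for admissible $g$. For the single-point functionals I would take the time-independent test functions $g=|x|^2$ for $X$, $g=|v|^2$ for $V$, and $g=x\cdot v$ for $C(X,V)$. These are not compactly supported, so strictly one first multiplies by a smooth cutoff $\chi_R$ and lets $R\to\infty$; the uniform compact-support bound of Lemma~\ref{L3.5} guarantees that the cutoff is eventually constant on $\mathrm{supp}\,\mu_t$, so no boundary terms survive. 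This is the only genuine regularity point and I would dispose of it in one sentence. Throughout I use the normalization $x_c=v_c=0$, so that $X=\int|x|^2\mu_t$, $V=\int|v|^2\mu_t$, $C(X,V)=\int x\cdot v\,\mu_t$, and the $\lambda_w$-part of $Q$ in \eqref{C-0} collapses to $-\lambda_w x$.

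With these choices the identities (i), (ii) and (iv) reduce to bookkeeping. Since $\nabla_v|x|^2=0$ and $\nabla_x|x|^2=2x$, (i) is immediate: $\frac{dX}{dt}=\int 2x\cdot v\,\mu_t=2C(X,V)$. For (ii), $\nabla_x|v|^2=0$ and $\nabla_v|v|^2=2v$ give $\frac{dV}{dt}=2\int Q\cdot v\,\mu_t$, and for (iv), $\nabla_x(x\cdot v)=v$ and $\nabla_v(x\cdot v)=x$ give $\frac{dC(X,V)}{dt}=\int|v|^2\mu_t+\int Q\cdot x\,\mu_t=V+\int Q\cdot x\,\mu_t$. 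In each case I then substitute the expansion of $Q$ from \eqref{C-0} and am left with a handful of double integrals against $\mu_t\otimes\mu_t$.

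The workhorse for evaluating those double integrals is a symmetrization: in any term $\int\int\phi(|x-x^*|)(\cdots)\mu_t(dx,dv)\mu_t(dx^*,dv^*)$ I relabel $(x,v)\leftrightarrow(x^*,v^*)$, use $\phi(|x-x^*|)=\phi(|x^*-x|)$, and average the two copies. This turns the antisymmetric factors into manifestly symmetric ones: $(x^*-x)\cdot v\mapsto-\tfrac12(x-x^*)\cdot(v-v^*)$, $(v^*-v)\cdot v\mapsto-\tfrac12|v-v^*|^2$, $(x^*-x)\cdot x\mapsto-\tfrac12|x-x^*|^2$, and $(v^*-v)\cdot x\mapsto-\tfrac12(x-x^*)\cdot(v-v^*)$. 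Feeding these into the expansions of $2\int Q\cdot v$ and $\int Q\cdot x$ produces exactly $-\lambda_x C_\phi(X,V)$ and $-\lambda_v V_\phi$ for (ii), and $-\tfrac{\lambda_x}{2}X_\phi$ and $-\tfrac{\lambda_v}{2}C_\phi(X,V)$ for (iv), while the $\lambda_w$-pieces reduce to $-2\lambda_w C(X,V)$ and $-\lambda_w X$ directly from $x_c=v_c=0$.

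The only structurally different case is (iii), since $M$ is quadratic in $\mu_t$. Here I would extend the weak formulation to the product measure $\mu_t\otimes\mu_t$ tested against the symmetric kernel $\Phi(|x-x^*|)$; the product rule contributes the transport operator acting on each of the two factors, and by symmetry these two contributions coincide, yielding the factor $2$ in $\frac{dM}{dt}=2\int\int(v\cdot\nabla_x+Q\cdot\nabla_v)\Phi(|x-x^*|)\,\mu_t\mu_t$. Because $\Phi$ depends only on $x-x^*$ the $\nabla_v$-term drops, and the chain rule together with $\widetilde\phi(r^2)=\phi(r)$ gives $\nabla_x\Phi(|x-x^*|)=\phi(|x-x^*|)(x-x^*)$; one last symmetrization sends $(x-x^*)\cdot v\mapsto\tfrac12(x-x^*)\cdot(v-v^*)$, so that $\frac{dM}{dt}=C_\phi(X,V)$. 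I expect the main obstacle to be precisely this step: the clean justification that the quadratic functional $M$ may be differentiated term-by-term, that $\mu_t\otimes\mu_t$ inherits the weak formulation, and that the factor of $2$ is correct. The algebraic identities themselves are mechanical once the symmetrization is in place.
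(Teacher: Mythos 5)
Your proposal is correct and follows essentially the same route as the paper: testing the weak formulation against $g=|x|^2$, $|v|^2$, $x\cdot v$, symmetrizing the double integrals in $(x,v)\leftrightarrow(x^*,v^*)$ for the $\phi$-weighted terms, and differentiating the quadratic functional $M$ through the product measure (the paper writes this as the transport operator acting on both $x$ and $x^*$, which combines the two product-rule contributions directly rather than via your factor of $2$ followed by a halving symmetrization, but the computation is identical). Your cutoff remark for the non-compactly-supported test functions is a point the paper leaves implicit; otherwise there is nothing to add.
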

\begin{proof}
\noindent $(i)$ Choose $g(x,v)=\left|x\right|^2$ in \eqref{A-2} to get
\[\frac{d}{dt}\int_{\bbr^{2d}} \left|x\right|^2\,\mu_t(dx,dv) = 2 \int_{\bbr^{2d}}x\cdot v \,\mu_t(dx,dv)=2C(X,V)(t).\]
\noindent $(ii)$ Similarly, we choose $g(x,v)=|v|^2$ in \eqref{A-2} to obtain
\begin{align*}
\frac{d}{dt}\int_{\bbr^{2d}} |v|^2\,\mu_t(dx,dv)
&=\int_{\bbr^{2d}}2v\cdot Q(x,v,\mu_t)\,\mu_t(dx,dv)\\
&=\int_{\bbr^{2d}} 2v\cdot
\bigg[\int_{\bbr^{2d}}[\lambda_w(x^*-x)+\lambda_x\phi(|x-x^*|)(x^*-x)\cr
&\hspace{2.5cm}+\lambda_v\phi(|x-x^*|)(v^*-v)]\,\mu_t(dx^*,dv^*)\bigg]\,\mu_t(dx,dv)\\
&=:\mathcal{I}_{41} + \mathcal{I}_{42} + \mathcal{I}_{43}.
\end{align*}
\noindent$\bullet$ (Estimate of $\mathcal{I}_{41}$) : Since we assume the zero-mean position condition
\[\int_{\bbr^{2d}}x^*\,\mu_t(dx^*,dv^*)=0,\]
we get
\[\mathcal{I}_{41} = -2\lambda_w\int_{\bbr^{2d}}x\cdot v \,\mu_t(dx,dv)=-2\lambda_wC(X,V)(t).\]
\noindent$\bullet$ (Estimate of $\mathcal{I}_{42}$ and $\mathcal{I}_{43}$): From the definition of $Q_2$ and $Q_3$, we obtain
\begin{align*}
\mathcal{I}_{42} &= 2\lambda_x\int_{\bbr^{2d}}\left(  \int_{\mathbb{R}^{2d}}\phi(|x-x^*|)(x^*-x) \,\mu_t(dx^*,dv^*)\right) \cdot v\,\mu_t(dx,dv)\\
&= -\lambda_x\int_{\bbr^{4d} }\phi(|x-x^*|)(x-x^*)\cdot(v-v^*) \,\mu_t(dx^*,dv^*)\,\mu_t(dx,dv)\\
&=-\lambda_xC_\phi(X,V)	
\end{align*}
and
\begin{align*}
\mathcal{I}_{43}&= 2\lambda_v\int_{\bbr^{2d} }\left(\int_{\mathbb{R}^{2d}}\phi(|x-x^*|)(v^*-v) \,\mu_t(dx^*,dv^*) \right) \cdot v\,\mu_t(dx,dv) \\
&= -\lambda_v\int_{\bbr^{4d}}\phi(|x-x^*|)|v-v^*|^2 \,\mu_t(dx^*,dv^*)\,\mu_t(dx,dv)=-\lambda_vV_\phi.
\end{align*}
We combine all the estimates of $\mathcal{I}_{4i}$, $i=1,2,3$, to obtain the desired estimate.\\

\noindent $(iii)$ It directly comes from the definition of the measure-valued solution that
\begin{align*}
\frac{dM}{dt}&=\int_{\bbr^{4d}}(v\cdot\nabla_x+v^*\cdot\nabla_{x^*})(\Phi(|x-x^*|))\,\mu_t(dx,dv)\,\mu_t(dx^*,dv^*)\\
&=\int_{\bbr^{4d}}\phi(|x-x^*|) ((x-x^*)\cdot v -(x-x^*)\cdot v^*)\,\mu_t(dx,dv)\,\mu_t(dx^*,dv^*)\\
&=\int_{\bbr^{4d}}\phi(|x-x^*|) (x-x^*)\cdot (v-v^*)\,\mu_t(dx,dv)\,\mu_t(dx^*,dv^*)\\
&=C_\phi(X,V).
\end{align*}

\noindent $(iv)$ Again, a direct calculation yields,

\begin{align*}
&\frac{dC(X,V)}{dt}=\int_{\bbr^{2d}}|v|^2\mu_t(dx,dv)+\int_{\bbr^{2d}}x\cdot Q(x,v,\mu_t)\mu_t(dx,dv)\\
&=V+\int_{\bbr^{2d}}x\cdot\left[\int_{\bbr^{2d}}\lambda_w(x^*-x)+\lambda_x\phi(|x-x^*|)(x^*-x)+\lambda_v\phi(|x-x^*|)(v^*-v)\right]\mu_t(dx,dv)\\
&=V-\lambda_wX-\frac{\lambda_x}{2}X_\phi-\frac{\lambda_v}{2}C_\phi(X,V).
\end{align*}
\end{proof}

\subsection{Proof of the Theorem \ref{T.1.2}} We now prove our first result on the asymptotic behavior.
For this,  we introduce a herding energy functional:
\begin{definition}\label{Total E}
	We define our herding energy functional as follows:
	\begin{align*}
	E(t):=\lambda_wX(t)+V(t)+\lambda_xM(t).
	\end{align*}
\end{definition}

\begin{remark}
In the previous section, we have normalized all parameters to be unity because the specific values of parameters were irrelevant in the existence proof. From now on, however, the parameters must satisfy a specific condition (see \eqref{C-12}) to guarantee the emergence of exponential herding behavior. That's why we explicitly revealed the dependence of energy function on the parameters in Definition  \ref{Total E}. We also note that the herding energy $E$ can be understood as a functional measuring the variances of $x$ and $v$ together with the potential energy of the market, in that the  auxiliary functional $M$ can be understood as a potential energy.
\end{remark}
We first need the following lemma \cite{B}.
\begin{lemma}[\bf Barbalat's Lemma \cite{B}]\label{L4.4}
	If a differentiable function $L(t)$ has a finite limit as $t \rightarrow \infty$ and if $L'(t)$ is uniformly continuous (or $L''(t)$ is bounded), then $L'(t) \rightarrow 0$ as $t \rightarrow \infty$.
\end{lemma}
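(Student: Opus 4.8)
The plan is to prove Barbalat's Lemma by contradiction: I would assume $L'(t)\not\to 0$, extract a sequence of times at which $L'$ stays bounded away from zero, and then use the uniform continuity of $L'$ to inflate these into intervals on which $L'$ has definite sign and definite size. Integrating $L'$ over such an interval produces an increment of $L$ that cannot be made small, contradicting the convergence of $L$ through the Cauchy criterion.

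Concretely, first I would suppose the conclusion fails. Negating $L'(t)\to 0$ produces an $\varepsilon>0$ and a sequence $t_n\to\infty$ with $|L'(t_n)|\ge\varepsilon$ for all $n$. Next I would invoke the uniform continuity of $L'$ to choose $\delta>0$, independent of $n$, such that $|L'(t)-L'(s)|<\varepsilon/2$ whenever $|t-s|\le\delta$. In particular, for $t\in[t_n,t_n+\delta]$ we have $|L'(t)-L'(t_n)|<\varepsilon/2$, which forces $L'(t)$ to retain the sign of $L'(t_n)$ and to satisfy $|L'(t)|\ge\varepsilon/2$ throughout the interval.

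Since $L'$ keeps a constant sign on $[t_n,t_n+\delta]$, the fundamental theorem of calculus yields
\[
|L(t_n+\delta)-L(t_n)|=\int_{t_n}^{t_n+\delta}|L'(t)|\,dt\ge\frac{\varepsilon\delta}{2}>0 .
\]
On the other hand, because $L$ has a finite limit as $t\to\infty$ and both $t_n\to\infty$ and $t_n+\delta\to\infty$, the Cauchy criterion forces $L(t_n+\delta)-L(t_n)\to 0$ as $n\to\infty$. This contradicts the uniform lower bound $\varepsilon\delta/2$, so the assumption was false and hence $L'(t)\to 0$.

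Finally, for the parenthetical alternative I would simply observe that a uniform bound $|L''(t)|\le M$ makes $L'$ Lipschitz with constant $M$, hence uniformly continuous, so that hypothesis is subsumed by the case already treated. I do not anticipate a genuine obstacle here, as the argument is elementary real analysis; the only delicate point is the sign-preservation of $L'$ on $[t_n,t_n+\delta]$, but this is immediate from the choice of $\delta$ via the uniform-continuity modulus, and it is precisely where the hypothesis enters, so no separate bookkeeping for overlapping or well-separated intervals is required.
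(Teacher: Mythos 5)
Your proof is correct and complete. Note that the paper itself offers no proof of this statement: it is quoted as a known result with a citation to Barbalat's original 1959 paper, so there is no in-paper argument to compare against. What you have written is the standard contradiction argument, and every step checks out: the negation of $L'(t)\to 0$ correctly yields $\varepsilon>0$ and $t_n\to\infty$ with $|L'(t_n)|\ge\varepsilon$; the uniform-continuity modulus $\delta$ gives $|L'(t)-L'(t_n)|<\varepsilon/2$ on $[t_n,t_n+\delta]$, which indeed forces both sign preservation and the lower bound $|L'(t)|\ge\varepsilon/2$ there; the constant sign justifies replacing $\bigl|\int_{t_n}^{t_n+\delta}L'\bigr|$ by $\int_{t_n}^{t_n+\delta}|L'|$, giving the uniform increment $\varepsilon\delta/2$; and the Cauchy criterion applied to the convergent function $L$ supplies the contradiction. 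Your closing observation that a bound $|L''|\le M$ makes $L'$ Lipschitz (via the mean value theorem, $L'$ being differentiable by hypothesis) and hence uniformly continuous correctly subsumes the parenthetical variant, so the lemma is fully established in the form the paper uses it.
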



Now, we provide a proof of Theorem \ref{T.1.2}. For reader's convenience, we split the proof into four steps.\\

\noindent{\bf Step I: } We first note from Lemma \ref{L3.12} $(i)$-$(iii)$ that
\begin{align*}
\frac{d}{dt}E(t)
&=\frac{d}{dt}(\lambda_wX(t)+V(t)+\lambda_xM(t))\cr
&=2\lambda_wC(X,V)+(-2\lambda_wC(X,V)-\lambda_xC_\phi(X,V)-\lambda_vV_\phi)
+\lambda_xC_\phi(X,V)\cr
&=-\lambda_vV_\phi\leq0.
\end{align*}
Therefore, $E(t)$ is a positive non-increasing function, and hence it converges to, say  $E^\infty$ as $t\to \infty$:
\begin{align}\label{E=0}
\lim_{t\to\infty} E(t)=E^\infty.
\end{align}

\noindent{\bf Step II:} In this step, we show that
\begin{align}\label{V=0}
\lim_{t\rightarrow\infty}V(t)=0.
\end{align}
For this, we first observe from the definition of functionals $V$ and $V_\phi$ that
	\begin{align*}
	\begin{aligned}
	V_\phi&=\int_{\bbr^{4d}}\phi(|x-x^*|)|v-v^*|^2\mu_t(dx,dv)\mu_t(dx^*,dv^*)\cr
	&\ge\phi_m \int_{\bbr^{4d}}|v-v^*|^2\mu_t(dx,dv)\mu_t(dx^*,dv^*)\\
	&=2\phi_m\int_{\bbr^{2d}}|v|^2\mu_t(dx,dv)=2\phi_mV.
	\end{aligned}
	\end{align*}
	Therefore,
	\[\frac{dE}{dt}=-\lambda_vV_\phi\le -2\lambda_v\phi_mV,\]
	which implies
\begin{align}\label{phi m need}
	\int_0^t V(s)\,ds \le\frac{1}{2\phi_m}\int_0^t V_\phi(s)\,ds = \frac{E(0)-E(t)}{2\phi_m\lambda_v} \le \frac{E(0)}{2\phi_m\lambda_v}.
\end{align}

Consequently, the integral $\int_0^t V(s)\,ds$, which plays the role of $L$ in the Barbalat's lemma, is a bounded increasing function of $t$, and has a finite limit as $t\rightarrow\infty$.

Hence, in order to show $V(t)$ converges to 0 by using Lemma \ref{L4.4}, it suffice to show that the derivative of $V$ is uniformly bounded. However, in virtue of Cauchy-Schwartz inequality, $V'$ can be estimated as	
\begin{align*}
\begin{split}
	\left|V'(t)\right|&=\left|-2\lambda_wC(X,V)-\lambda_xC_\phi(X,V)-\lambda_vV_\phi\right|\cr
	&\leq \lambda_w (X+V)+2\lambda_x\phi_M (X+V)+2\lambda_v\phi_M V\cr
	&\leq (\lambda_w+2\lambda_x\phi_M)X + ( \lambda_w + 2\lambda_x\phi_M + 2\lambda_v\phi_M ) V\cr
	&\leq \max \left\{1+2\frac{\lambda_x}{\lambda_w}\phi_M,\lambda_w + 2\lambda_x\phi_M + 2\lambda_v\phi_M\right\} (\lambda_w X + V)\cr
	&\leq \max \left\{1+2\frac{\lambda_x}{\lambda_w}\phi_M,\lambda_w + 2\lambda_x\phi_M + 2\lambda_v\phi_M\right\} E(0).
\end{split}
\end{align*}	
	Therefore, Lemma \ref{L4.4} implies that $V$ converges to 0 as $t\to \infty$.  \newline
	
\noindent{\bf Step III}: The goal of this step is to prove that
\begin{align}\label{Einfty=0}
E^{\infty} =0.
\end{align}
Suppose in contrary that $E^\infty>0$, so that we can choose $\varepsilon>0$ small enough to satisfy
	\[
	\sqrt{\varepsilon} \left( \sqrt{\varepsilon} + \frac{\lambda_w+\lambda_x\phi_m}{\lambda_w+\lambda_x\phi_M} \sqrt{\varepsilon} + \frac{\lambda_v\phi_M}{2} \sqrt{\frac{E^\infty+\varepsilon}{\lambda_w+\lambda_x\phi_m}} \right) \leq  \frac{\lambda_w+\lambda_x\phi_m}{2(\lambda_w+\lambda_x\phi_M)}E^\infty,
	\]
	which gives
	\begin{align} \label{eps exi}
	\varepsilon -\frac{\lambda_w+\lambda_x\phi_m}{\lambda_w+\lambda_x\phi_M}(E^\infty-\varepsilon) + \frac{\lambda_v\phi_M}{2} \sqrt{\frac{E^\infty+\varepsilon}{\lambda_w+\lambda_x\phi_m}\varepsilon}\leq -\frac{\lambda_w+\lambda_x\phi_m}{2(\lambda_w+ \lambda_x \phi_M)}E^\infty.
	\end{align}
Now, we observe from the result of Step I and Step II and the definition of $E(t)$ that
\[
\lim_{t\to \infty}\big( \lambda_w X(t)+\lambda_x M(t) \big) = \lim_{t\to\infty}\big( E(t)-V(t) \big)=E^\infty.
\]
Also, we observe that
	\begin{align*}
	|X'(t)|&=|2C(X,V)(t)| \cr&\leq 2  \left(\int_{\bbr^{2d}}|x|^2\,\mu_t(dx,dv)\right)^{1/2}\left(\int_{\bbr^{2d}}|v|^2\,\mu_t(dx,dv)\right)^{1/2} \cr &\leq C\sqrt{V}\rightarrow 0.
	\end{align*}
Therefore,  we can find  $T>0$ such that, for  $t\geq T$, we have the following three inequalities:
	\begin{align}\label{xp}
	|X'(t)|=|2C(X,V)(t)|<\varepsilon,
	\end{align}
\begin{align}\label{xp2}
|\lambda_w X(t)+\lambda_x M(t)-E^{\infty}|<\varepsilon,
\end{align}
\begin{align}\label{xp3}
|V(t)|<\varepsilon.
\end{align}
On the other hand, 
since
\begin{align*}
\frac{1}{2}\phi_m|x-x^*|^2\le\Phi(|x-x^*|)=\frac{1}{2}\int_0^{|x-x^*|^2}\phi(\sqrt{s})\,ds\le \frac{1}{2}\phi_M|x-x^*|^2,
\end{align*}
we get 
\begin{align*}
(\lambda_w+ \lambda_x \phi_m)X \leq \lambda_w X+ \lambda_x M \leq (\lambda_w+\lambda_x \phi_M)X.
\end{align*}
Therefore,  we can  estimate $C(X,V)'$ as
\begin{align*}
\begin{split}
		C(X,V)'(t) &= V(t)-\lambda_wX(t) -\frac{\lambda_x}{2}X_{\phi}(t) -\frac{\lambda_v}{2}C_\phi(X,V)(t) \\
		&\leq V(t)-\left(\lambda_w+\lambda_x\phi_m\right)X(t) +\frac{\lambda_v\phi_M}{2}\sqrt{XV}  \cr
		&< \varepsilon -\frac{\lambda_w+\lambda_x\phi_m}{\lambda_w+\lambda_x \phi_M}(\lambda_w X+ \lambda_x M) + \frac{\lambda_v\phi_M}{2} \sqrt{\frac{\lambda_w X+ \lambda_x M}{\lambda_w+\lambda_x\phi_m}}\sqrt{V(t)}.
\end{split}
\end{align*}
Consequently, if $t>T$, we can apply (\ref{xp2}) and (\ref{xp3}) to derive
\begin{align}\label{c ineq}
\begin{split}
C(X,V)'(t)&< \varepsilon -\frac{\lambda_w+\lambda_x\phi_m}{\lambda_w+\lambda_x \phi_M}(E^\infty-\varepsilon) + \frac{\lambda_v\phi_M}{2} \sqrt{\frac{E^\infty+\varepsilon}{\lambda_w+\lambda_x\phi_m}\varepsilon}\cr
&\leq -\frac{\lambda_w+\lambda_x\phi_m}{2(\lambda_w+\lambda_x \phi_M)}E^\infty,
\end{split}
\end{align}
where the last line follows from \eqref{eps exi}.\newline
Now, consider an open interval $(t_1,t_2)$ in $[T,\infty)$ such that
	\[
	(t_2-t_1) > \frac{3(\lambda_w+\lambda_x\phi_M)}{(\lambda_w+\lambda_x\phi_m)E^\infty}\varepsilon, \quad  T<t_1<t_2<\infty.
	\]
	Then, we can deduce from \eqref{c ineq} that
	\begin{align*}
		X'(t_2)-X'(t_1) & = 2\int_{t_1}^{t_2} C'(X,V)(u) du \leq -\int_{t_1}^{t_2}\frac{\lambda_w+\lambda_x\phi_m}{\lambda_w+\lambda_x\phi_M}E^\infty du \\
		&= -(t_2-t_1)\frac{\lambda_w+\lambda_x\phi_m}{\lambda_w+\lambda_x\phi_M}E^\infty <-3\varepsilon.
	\end{align*}
That is,
\begin{align*}
		|X'(t_2)-X'(t_1)| >3\varepsilon.
	\end{align*}
This, however, contradicts to
	\begin{align*}
		|X'(t_2)-X'(t_1)|=|2C(X,V)(t_2)-2C(X,V)(t_1)| < 2\varepsilon,
	\end{align*}
	which follows from \eqref{xp}. 	Therefore, $E^\infty$ must be zero.\newline
	
\noindent{\bf Step IV}  We've shown in Step II that $V(t)$ vanishes as $t\rightarrow\infty$. The  decay of $X(t)$ is obtained from the combined use of
(\ref{E=0}), (\ref{V=0}) and (\ref{Einfty=0}):
	\[
	\lim_{t\to \infty}\big( \lambda_w X(t)+\lambda_x M(t) \big) = \lim_{t\to\infty}\big( E(t)-V(t) \big)=E^\infty=0.
	\]
This completes the proof.

\subsection{Proof of Theorem \ref{T.1.3}}
In this subsection, we provide a detailed proof of Theorem \ref{T.1.3}, which states the exponential decay of variation. In order to obtain exponential decay estimate, we construct a new energy functional $K$ by the linear combination of $X,V$ and $C(X,V)$. We can understand this energy as a measurement of the joint variability of two variables $x$ and $v$.
	\begin{definition} \label{D4.5}
		We define the fast decaying energy functional by
		\begin{align*}
		K(t):= \lambda_w X + \alpha C(X,V) + V,
		\end{align*}
		where $\alpha$ and $\theta$ are given as
		\[\alpha=\frac{2\lambda_x}{\lambda_v \left(1+2\frac{\theta \lambda_w}{\phi_M \lambda_x}\right)},\quad \theta=\frac{1}{2}\min(1,\phi_M).\]
	\end{definition}
In the following, we verify that $K$ decays exponentially fast.
\begin{proposition}\label{decay K}
Let $K$ be a functional defined in Definition \ref{D4.5} with the same choice of $\alpha$ and $\theta$. Then, $K$ is positive and exponentially decays to 0: there exists a positive constant $\beta$ such that
\[
K(t)\le K(0)e^{-\beta t}.
\]
\end{proposition}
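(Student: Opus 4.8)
The plan is to establish the two claims of Proposition \ref{decay K} separately: first that $K(t)\ge 0$, and then the differential inequality $\frac{dK}{dt}\le -\beta K$, from which exponential decay follows by Gr\"onwall's lemma.

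For positivity, I would treat $K$ as a quadratic form in $\sqrt{X}$ and $\sqrt{V}$. Since $X,V\ge 0$ and Cauchy--Schwarz gives $C(X,V)\ge -\sqrt{XV}$, while $\alpha>0$ by Definition \ref{D4.5}, we have $K\ge \lambda_w X-\alpha\sqrt{XV}+V$, which is nonnegative exactly when the discriminant condition $\alpha^2<4\lambda_w$ holds. Because the denominator defining $\alpha$ exceeds $1$, one has $\alpha<2\lambda_x/\lambda_v$, and the smallness hypothesis \eqref{C-12} forces $\lambda_x^2/(\lambda_w\lambda_v^2)<\frac{\phi_m}{\phi_M}\cdot\frac12\le\frac12$; hence $\alpha^2<4\lambda_x^2/\lambda_v^2<4\lambda_w$, so $K$ is a positive definite form.

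Next I would differentiate $K$ using Lemma \ref{L3.12}(i),(ii),(iv). The decisive simplification is that $\lambda_w\frac{dX}{dt}=2\lambda_w C(X,V)$ cancels exactly against the term $-2\lambda_w C(X,V)$ inside $\frac{dV}{dt}$, leaving
\[
\frac{dK}{dt} = -\alpha\lambda_w X + \alpha V - \frac{\alpha\lambda_x}{2} X_\phi - \Big(\frac{\alpha\lambda_v}{2}+\lambda_x\Big) C_\phi(X,V) - \lambda_v V_\phi.
\]
The genuinely indefinite quantities are the weighted covariance $C_\phi(X,V)$ appearing here and the covariance $C(X,V)$ entering $\beta K$ on the right of the target inequality; these cross terms encode the coupling between the price variable $x$ and the favorability variable $v$, and are the heart of the matter. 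I would bound $|C_\phi(X,V)|\le\sqrt{X_\phi V_\phi}$ and $|C(X,V)|\le\sqrt{XV}$ by Cauchy--Schwarz and then apply Young's inequality, splitting $\sqrt{X_\phi V_\phi}$ so that a portion is charged against $-\frac{\alpha\lambda_x}{2}X_\phi$ and a portion against $-\lambda_v V_\phi$. The coercivity bounds $2\phi_m X\le X_\phi\le 2\phi_M X$ and $V_\phi\ge 2\phi_m V$ (this is where the lower bound $\phi_m>0$ is indispensable) convert the surviving weighted energies into $X$ and $V$. The values $\theta=\frac{1}{2}\min(1,\phi_M)$ and $\alpha$ are rigged precisely so that, after this splitting, the net coefficients of the $X$- and $V$-type contributions are strictly negative, with condition \eqref{C-12} guaranteeing that the arithmetic closes; choosing $\beta>0$ small enough (the value recorded in \eqref{def beta}) to also swallow the residual term $\beta\alpha C(X,V)$ then yields $\frac{dK}{dt}\le-\beta K$, and Gr\"onwall's inequality gives $K(t)\le K(0)e^{-\beta t}$.

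I expect the main obstacle to be the bookkeeping in the Young's inequality step: one must split $C_\phi(X,V)$ with exactly the right weight so that the share of $X_\phi$ and $V_\phi$ remaining after reserving part for the cross term still dominates the positive contributions $\alpha V$ and $\beta\alpha C(X,V)$ through the coercivity bounds. This balance is feasible only under the quantitative restriction \eqref{C-12}, which is why the admissible parameter set here is far more constrained than in Theorem \ref{T.1.2}.
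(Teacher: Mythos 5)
Your overall architecture tracks the paper's: the exact cancellation of $2\lambda_w C(X,V)$ between $\lambda_w X'$ and $V'$, the resulting expression for $\frac{dK}{dt}$, the positivity of $K$ via $\alpha^2<4\lambda_w$ (your chain $\alpha^2<4\lambda_x^2/\lambda_v^2<4\lambda_w$ from \eqref{C-12} is a correct variant of the paper's estimate \eqref{D-12}), and the final absorption of $\alpha C(X,V)$ into $\lambda_w X+V$ before Gr\"onwall. The paper treats the weighted cross term by completing the square pointwise in the integrand rather than by Cauchy--Schwarz plus Young on the integrated functionals, but those two devices are equivalent here.

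The gap is in the absorption step as you describe it. You propose to charge $\bigl(\tfrac{\alpha\lambda_v}{2}+\lambda_x\bigr)\sqrt{X_\phi V_\phi}$ against $-\tfrac{\alpha\lambda_x}{2}X_\phi$ and $-\lambda_v V_\phi$ alone. The relevant quadratic form in $(s,t)=(\sqrt{X_\phi},\sqrt{V_\phi})$ is $-\tfrac{\alpha\lambda_x}{2}s^2+\bigl(\tfrac{\alpha\lambda_v}{2}+\lambda_x\bigr)st-\lambda_v t^2$, whose discriminant is
\begin{equation*}
\Bigl(\tfrac{\alpha\lambda_v}{2}+\lambda_x\Bigr)^2-4\cdot\tfrac{\alpha\lambda_x}{2}\cdot\lambda_v=\Bigl(\tfrac{\alpha\lambda_v}{2}-\lambda_x\Bigr)^2\ \ge\ 0
\end{equation*}
for \emph{every} parameter choice; condition \eqref{C-12} does not enter these three coefficients and cannot rescue the sign. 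At the borderline $\alpha=2\lambda_x/\lambda_v$ the form is a perfect negative square, leaving no residual $-cV_\phi$ with which to dominate the positive term $+\alpha V$; and for the actual $\alpha=2\lambda_x/((2L-1)\lambda_v)<2\lambda_x/\lambda_v$ of Definition \ref{D4.5} the form is indefinite, so the absorption fails outright. The missing idea is the paper's donation of a $\theta$-fraction of the good term $-\alpha\lambda_w X$ to the $X_\phi$ coefficient via $X\ge X_\phi/(2\phi_M)$ --- this is the only place where the upper bound $X_\phi\le 2\phi_M X$ and the value $\theta=\tfrac12\min(1,\phi_M)$ are actually used. It raises the $X_\phi$ coefficient from $\tfrac{\alpha\lambda_x}{2}$ to $\tfrac{\alpha\lambda_x}{2}L$ with $L=1+\tfrac{\theta\lambda_w}{\phi_M\lambda_x}>1$, makes the discriminant strictly negative, and leaves exactly $-\tfrac{\theta\lambda_w\lambda_v^2}{\phi_M\lambda_x^2}\tfrac{\alpha}{2}V_\phi\le-\tfrac{\theta\lambda_w\lambda_v^2}{\phi_M\lambda_x^2}\alpha\phi_m V$, which beats $+\alpha V$ precisely under \eqref{C-12}, while $-(1-\theta)\alpha\lambda_w X$ survives as the coercive $X$-term. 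Once you insert this step, your argument closes and coincides with the paper's proof.
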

\begin{proof}
	From Lemma \ref{L3.12}, we have
	\begin{align}\label{K1' 1}
	\begin{split}
	\frac{d}{dt}K(t) &= 2 \lambda_w C(X,V) +\alpha\bigg\{V -\lambda_w X- \frac{\lambda_x}{2}X_{\phi}- \frac{\lambda_v}{2} C_{\phi}(X,V)\bigg\} \\
	&+ \bigg\{-2\lambda_w C(X,V)
	- \lambda_x C_{\phi}(X,V) - \lambda_vV_{\phi}\bigg\}    \cr
	&= \alpha V-\alpha\lambda_w X
	-\frac{\alpha\lambda_x}{2}\bigg\{  X_{\phi} + \left(\frac{\lambda_v}{\lambda_x} + \frac{2}{\alpha}\right) C_{\phi}(X,V) + \frac{\lambda_v}{\lambda_x}\frac{2}{\alpha}V_{\phi}\bigg\}.
	\end{split}
	\end{align}
	We use $\theta=\frac{1}{2}\min\{1,\phi_M\}$ and $\phi \leq \phi_M$ to obtain
	\begin{align*}
	\begin{split}
	-\alpha\lambda_w X&=-(1-\theta)\alpha\lambda_w X-\theta\alpha\lambda_w X\cr	
	&=-(1-\theta)\alpha\lambda_w X 	-\frac{\theta\alpha\lambda_w}{2\phi_M}X_{\phi}\cr
	&-\frac{\theta\alpha\lambda_w}{2}\int_{\bbr^{4d} } \left(1-\frac{\phi\left(|x^*-x|\right)}{\phi_M}\right)|x-x^*|^2  \,\mu_t(dx^*,dv^*)\,\mu_t(dx,dv)\cr
	&\leq-(1-\theta)\alpha\lambda_w X 	-\frac{\theta\alpha\lambda_w}{2\phi_M}X_{\phi},
	\end{split}
	\end{align*}
	which, combined \eqref{K1' 1}, gives	
\begin{align}\label{K1' 2}
\begin{split}
&\frac{d}{dt}K(t)\cr
&\quad\leq \alpha V-(1-\theta)\alpha\lambda_w X
-\frac{\alpha\lambda_x}{2}\bigg\{  \left(1+ \frac{\theta\lambda_w}{\phi_M\lambda_x}\right)X_{\phi} + \left(\frac{\lambda_v}{\lambda_x} + \frac{2}{\alpha}\right) C_{\phi}(X,V) + \frac{\lambda_v}{\lambda_x}\frac{2}{\alpha}V_{\phi}\bigg\}\cr
&\quad:=\alpha V-(1-\theta)\alpha\lambda_w X+\mathcal{I}_{5}.
	\end{split}
	\end{align}
For brevity, we introduce $L=1+\frac{\theta\lambda_w}{\phi_M\lambda_x}$, $\mu_t=\mu_t(dx,dv)$, $\mu^*_t=\mu_t(dx^*,dv^*) $, $\phi=\phi(|x^*-x|)$,
 and compute $\mathcal{I}_{5}$ as		
	\begin{align*}
	\mathcal{I}_{5}&=-\frac{\alpha\lambda_x}{2} \int_{\bbr^{4d}}\phi \bigg[\bigg(1+ \frac{\theta\lambda_w}{\phi_M\lambda_x}\bigg)|x-x^*|^2 \cr
	&\hspace{3cm}+ \left(\frac{\lambda_v}{\lambda_x} + \frac{2}{\alpha}\right) (x-x^*)\cdot (v-v^*) + \frac{\lambda_v}{\lambda_x}\frac{2}{\alpha}|v-v^*|^2\bigg] \,\mu^*_t\,\mu_t \cr
	&=-\frac{\alpha\lambda_x}{2} \int_{\bbr^{4d}}\phi \bigg[  L|x-x^*|^2 \cr
	&\hspace{3cm}+ \left(\frac{\lambda_v}{\lambda_x} + \frac{2}{\alpha}\right) (x-x^*)\cdot (v-v^*) + \frac{\lambda_v}{\lambda_x}\frac{2}{\alpha}|v-v^*|^2\bigg] \,\mu_t^*\,\mu_t \cr
	&=-\frac{\alpha\lambda_x}{2} \int_{\bbr^{4d}}\phi \left[  L\left(|x-x^*|^2 + \frac{\left(\frac{\lambda_v}{\lambda_x}+\frac{2}{\alpha}\right)}{L} (x-x^*)\cdot (v-v^*) + \frac{\left(\frac{\lambda_v}{\lambda_x}+\frac{2}{\alpha}\right)^2}{4L^2}|v-v^*|^2\right)\right.\\
	&\hspace{3cm} +\bigg(  \left.-\frac{\left(\frac{\lambda_v}{\lambda_x}+\frac{2}{\alpha}\right)^2}{4L}+ \frac{2\lambda_v}{\lambda_x \alpha} \bigg)|v-v^*|^2\right] \,\mu_t^*\,\mu_t.
	\end{align*}
Then, we are able to check
\[
-\frac{\left(\frac{\lambda_v}{\lambda_x}+\frac{2}{\alpha}\right)^2}{4L}+\frac{2\lambda_v}{\alpha\lambda_x}=-\frac{\left(\frac{2L\lambda_v}{\lambda_x}\right)^2}{4L}+\frac{(2L-1)\lambda_v^2}{\lambda_x^2}=(L-1)\left(\frac{\lambda_v}{\lambda_x}\right)^2=\frac{\theta\lambda_w}{\phi_M\lambda_x}\left(\frac{\lambda_v}{\lambda_x}\right)^2>0
\]
for $\alpha=\frac{2\lambda_x}{(2L-1)\lambda_v}$, and
	\begin{align*}
&|x-x^*|^2 + \frac{\left(\frac{\lambda_v}{\lambda_x}+\frac{2}{\alpha}\right)}{L} (x-x^*)\cdot (v-v^*) + \frac{\left(\frac{\lambda_v}{\lambda_x}+\frac{2}{\alpha}\right)^2}{4L^2}|v-v^*|^2\cr
&\hspace{2cm}=\bigg|(x-x^*)+\frac{\left(\frac{\lambda_v}{\lambda_x}+\frac{2}{\alpha}\right)}{2L}(v-v^*)\bigg|^2\cr
&\hspace{2cm}\geq 0
	\end{align*}
	to get
	\begin{align*}
	\mathcal{I}_{5}&\leq
	-\frac{ \theta\lambda_w \lambda_v^2}{ \phi_M\lambda_x^2}\frac{\alpha}{2}   \int_{\bbr^{4d}}\phi(|x^*-x|) |v-v^*|^2 \,\mu_t(dx^*,dv^*)\,\mu_t(dx,dv) \leq
	-\frac{ \theta\lambda_w \lambda_v^2}{ \phi_M\lambda_x^2}\alpha  \phi_m  V.
	\end{align*}
We go back to \eqref{K1' 2} with this estimate to obtain
	\begin{align}\label{K1' 3}
	\begin{split}
	\frac{d}{dt}K(t)&\leq \alpha V-(1-\theta)\alpha\lambda_w X
	-\frac{ \theta\lambda_w \lambda_v^2}{ \phi_M\lambda_x^2}\alpha  \phi_m V\cr
	&\leq -\min\Bigg\{(1-\theta),-1+\frac{ \theta\lambda_w \lambda_v^2}{ \phi_M\lambda_x^2} \phi_m  \Bigg\}\alpha (\lambda_w X + V).
	\end{split}
	\end{align}
	In order to derive a Gr\"onwall type inequality from \eqref{K1' 3}, we choose  $\delta$ to satisfy
\[
0<\delta<1,\quad \frac{\delta^2}{(1-\delta)^2}<\frac{4\lambda_w}{\alpha^2},
\]
	so that
	\begin{align*}
	-(\lambda_w X+V)\leq -\delta(\lambda_w X+\alpha C(X,V)+V).
	\end{align*}
	Now, we combine this with \eqref{K1' 3} to obtain the desired estimate:
	\begin{align}\label{def beta}
	\frac{d}{dt}K(t)
	&\leq -\min\left\{(1-\theta),-1+\frac{ \theta\lambda_w \lambda_v^2}{ \phi_M\lambda_x^2} \phi_m  \right\}\alpha \delta K(t)=:-\beta K(t).
	\end{align}
\end{proof}	
We now prove Theorem \ref{T.1.3}. We start with the decay of $X(t)$. From the choice of $\alpha$, we have
\[
\alpha=\frac{2\lambda_x}{(2L-1)\lambda_v}\le \sqrt{\frac{\theta\lambda_w\phi_m}{\phi_M}}\frac{2}{2L-1}\le \frac{2\sqrt{\lambda_w}}{2L-1}=\frac{2\sqrt{\lambda_w}}{1+2\frac{\theta\lambda_w}{\phi_M\lambda_x}}\le 2\sqrt{\lambda_w}
\]
and hence,
\begin{align}
\begin{aligned} \label{D-12}
K(t)&=\lambda_w X+\alpha C(X,V)+V\cr
&=\int_{\bbr^{2d}} (\lambda_w|x|^2+\alpha x\cdot v+|v|^2)\mu(dx,dv)\\
&=\int_{\bbr^{2d}}\left(\left|\frac{\alpha}{2}x+v\right|^2+\left(\lambda_w-\frac{\alpha^2}{4}\right)|x|^2\right)\mu(dx,dv)\cr
&\geq\left(\lambda_w-\frac{\alpha^2}{4}\right)X(t),
\end{aligned}
\end{align}
which gives the desired estimate for $X(t)$:
\begin{align}\label{X decay}
X(t)\le \frac{K(t)}{\lambda_w-\frac{\alpha^2}{4}}\le \frac{K(0)e^{-\beta t}}{\lambda_w-\frac{\alpha^2}{4}}.
\end{align}
For the decay of $V(t)$, we find
\[
V=K-\lambda_wX-\alpha C(X,V)\le K+\lambda_w X+\alpha|C(X,V)|\le K+\lambda_wX+\alpha\sqrt{XV}.
\]
Therefore, Proposition \ref{decay K}, the decay estimate of $X(t)$ given in (\ref{X decay}) and the boundedness of $V(t)$ implied by \eqref{V=0} give the desired decay estimate for $V(t)$.
This completes the proof.

\section{Existence of classical solution for kinetic equation}
In this section, we consider the existence of classical solutions and their asymptotic herding behavior to the Cauchy problem:
\begin{align}
\begin{aligned}\label{D-1}
&\frac{\partial{f}}{\partial{t}}+v \cdot \nabla_{x}f +\nabla_{v} \cdot \big(Q(f)f\big)=0,\cr
&\hspace{1cm}f(x,v,0)=f_0(x,v).
\end{aligned}
\end{align}

In the following theorem, all the functionals are defined in the exactly same manner as in the previous cases,
with $\mu_t(dx,dv)$ replaced by $f(x,v,t)dxdv$.




\begin{theorem}[Global existence of classical solutions]\label{T4.1} $(1)$ Suppose the communication rate satisfies (\ref{Phi condition}).
Let $f_0\in C^1 (\bbr^d \times \bbr^d)$ be compactly supported.
Then, for any positive time $T>0$, the Cauchy problem \eqref{D-1} has a unique solution $f \in C^1 (\bbr^d \times \bbr^d \times [0,T))$ satisfying
\[
E(t)\leq E(0), \quad t\geq 0.
\]

\noindent$(2)$ Suppose the communication rate also satisfies
\begin{align*}
\quad 0<\phi_m \leq \phi \leq \phi_M,
\end{align*}
for some constants $\phi_m, \phi_M>0$.
Then, we have the following herding phenomena:
\[
\lim_{t\to \infty} (X(t)+V(t))=0.
\]
\noindent $(3)$ We further assume that the parameters satisfy the following conditions:
\[
\frac{ \phi_M\lambda_x^2}{\phi_m\lambda_w \lambda_v^2} <\frac{1}{2}\min(1,\phi_M).
\]
Then, the herding occurs exponentially fast:
\[
X(t)+V(t)\le Ce^{-\frac{\beta}{2}t},
\]
for some positive constants $C$ and $\beta$.
\end{theorem}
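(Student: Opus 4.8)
The plan is to handle the three parts in order, reducing parts $(2)$ and $(3)$ to the measure-valued theory already in place. The key observation is the one made in the Remark preceding the statement: every functional ($X,V,C(X,V),M,X_\phi,V_\phi,C_\phi$ and the energies $E,K$) is defined by the same formula with $\mu_t(dx,dv)$ replaced by $f(x,v,t)\,dx\,dv$. Hence a classical solution $f$ of \eqref{D-1} induces, through $\mu_t:=f(x,v,t)\,dx\,dv$, a compactly supported measure-valued solution of \eqref{A-2} in the sense of Definition \ref{D3.1}. Once part $(1)$ is secured, parts $(2)$ and $(3)$ then follow \emph{verbatim} from Theorem \ref{T.1.2} and Theorem \ref{T.1.3}: the two-sided bound $\phi_m\le\phi\le\phi_M$ gives $\lim_{t\to\infty}(X(t)+V(t))=0$, and the additional restriction \eqref{C-12} gives $X(t)+V(t)\le Ce^{-\beta t/2}$. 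So the whole burden rests on constructing the $C^1$ solution in part $(1)$.

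For part $(1)$ I would build $f$ as the push-forward of the initial density along the characteristic flow. First invoke Theorem \ref{T.1.1} with $\mu_0:=f_0\,dx\,dv$ to obtain the unique compactly supported measure-valued solution $\mu_t$; by Lemma \ref{L3.5} its support stays bounded on $[0,T]$. Writing $\Phi_t(x,v):=\big(X_\mu(t;0,x,v),V_\mu(t;0,x,v)\big)$ for the bi-characteristic flow \eqref{C-1}, Lemma \ref{L3.6} identifies $\mu_t=(\Phi_t)_{\#}\mu_0$. The core analytic step is to show that $\Phi_t$ is a $C^1$-diffeomorphism. Since $\phi$ is smooth and even, writing $\phi(|x-x^*|)=\widetilde\phi(|x-x^*|^2)$ removes the apparent singularity at $x=x^*$, so the force $Q(\cdot,\cdot,\mu_t)=a(\mu_t)+b(\cdot,\mu_t)-(1+c(\cdot,\mu_t))x-c(\cdot,\mu_t)v$ from \eqref{C-0} is $C^1$ in $(x,v)$, with spatial derivatives bounded uniformly on $[0,T]$ thanks to the uniform compact support. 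Standard ODE theory (differentiating \eqref{C-1} in the initial data and a Gr\"onwall estimate for the Jacobian $D\Phi_t$) then yields that $\Phi_t$ is $C^1$, invertible, and that $\det D\Phi_t$ stays bounded away from $0$ on $[0,T]$.

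With the flow in hand, the change-of-variables formula $f(x,v,t)=f_0\big(\Phi_t^{-1}(x,v)\big)\,\big|\det D\Phi_t^{-1}(x,v)\big|$ defines a compactly supported $C^1$ density representing $\mu_t$; differentiating this representation (or using the transport identity along \eqref{C-1}) shows that $f$ solves \eqref{D-1} classically, and uniqueness is inherited from the uniqueness part of Theorem \ref{T.1.1}. The energy inequality $E(t)\le E(0)$ is then obtained by repeating the computation of Step I in the proof of Theorem \ref{T.1.2}; the integrations by parts used there are now fully justified because $f(\cdot,\cdot,t)\in C^1$ has compact support, and the computation again collapses to $\frac{d}{dt}E(t)=-\lambda_v V_\phi(t)\le 0$.

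The main obstacle is precisely the regularity and invertibility of $\Phi_t$, i.e.\ propagating the $C^1$ regularity of $f$. This hinges on controlling $D_xQ$ along the characteristics: one must bound the variational equation for $D\Phi_t$ uniformly on $[0,T]$ so that the Jacobian neither blows up nor degenerates, which is what keeps $f$ of class $C^1$ and makes the change of variables legitimate. The two ingredients that make this work are the uniform compact support from Lemma \ref{L3.5} (bounding all the integral kernels and their $x$-derivatives) and the smoothness of $\widetilde\phi$ (removing the singularity of $\nabla_x\phi(|x-x^*|)$ at coincident points).
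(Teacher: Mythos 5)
Your proposal is correct and follows essentially the same route as the paper, which in fact omits the proof entirely: it asserts that part (1) follows by "a similar argument as in [HT]" (the characteristics/push-forward construction you sketch) and that parts (2) and (3) are inherited from Theorems \ref{T.1.2} and \ref{T.1.3} because a classical solution is automatically a measure-valued solution. Your write-up supplies more detail than the paper does, and the one delicate point you flag --- the $C^1$ regularity of the Jacobian --- is handled exactly as you suggest, via the transport identity along \eqref{C-1} (equivalently, the explicit exponential formula for $\det D\Phi_t$ coming from $\nabla_v\cdot Q=-d\,c(x,\mu_t)$).
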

Part (1) can be derived by a similar argument as in \cite{HT}. Since the classical solutions are automatically measure-valued solutions, we can inherit
the proof of Theorem \ref{T.1.2} and \ref{T.1.3} to prove (2), (3). We omit the proof for brevity.



{\bf  Acknowledgement}.
Bae was supported by Basic Science Research Program through the National Research Foundation of Korea 
funded by the Ministry
of Education (NRF-2018R1D1A1A09082848).
Yun is supported by Samsung Science and Technology Foundation under Project Number SSTF-BA1801-02.
\bibliographystyle{amsplain}

\end{document}